\newif\iffullversion
\let\chapter\section
\setlist{leftmargin=*}
\newcommand{\eh}[1] {}
\newcommand{\jake}[1] {}
\def\E{\mathop{\mathbb{E}}}
\def\reals{\mathbb{R}}
\def\K{\mathcal{K}}
\def\bx{\bar{x}}
\def\hx{\hat{x}}
\def\vol{\textnormal{vol}}
\def\diam{\textnormal{diam}}
\def\tht{\hat{\theta}}
\def \hp{\textnormal{\sc HeatPath}}
\def\cp{\textnormal{\sc CentralPath}}
\def\har{\textnormal{\sc HitAndRun}}
\def\newton{\textnormal{\sc IterativeNewtonStep}}
\def\Or{\mathcal{O}_{\mathcal{K}}}
\newtheorem{lemma}{Lemma}
\newtheorem{corollary}{Corollary}
\newtheorem{theorem}{Theorem}
\newtheorem{proposition}{Proposition}
\title{Faster Convex Optimization: \\ Simulated Annealing with an Efficient Universal Barrier }
\author{
Jacob Abernethy  \\
Department of Computer Science\\
University of Michigan \\
\texttt{jabernet@umich.edu} \\
\and
Elad Hazan  \\
Department of Computer Science\\
Princeton University \\
\texttt{ehazan@princeton.edu} \\
}
\begin{document}

\maketitle

\begin{abstract}
This paper explores a surprising equivalence between two seemingly-distinct convex optimization methods. We show that simulated annealing, a well-studied random walk algorithms, is {\it directly equivalent}, in a certain sense, to the central path interior point algorithm for the the entropic universal barrier function. This connection exhibits several benefits. First, we are able improve the state of the art time complexity for convex optimization under the membership oracle model. We improve the analysis of the randomized algorithm of \citet{kalai2006simulated} by utilizing tools developed by \citet{nesterov1994interior} that underly the central path following interior point algorithm. We are able to tighten the temperature schedule for simulated annealing which gives an improved running time, reducing by square root of the dimension in certain instances. Second, we get an efficient randomized interior point method with an efficiently computable universal barrier for any convex set described by a membership oracle. Previously, efficiently computable barriers were known only for particular convex sets.
\end{abstract}

\newpage

\section{Introduction}

Convex optimization is by now a well established field and a cornerstone of the fields of algorithms and machine learning. Polynomial time  methods for convex optimization belong to relatively few classes: the oldest and perhaps most general is the ellipsoid method with roots back to Kachiyan in the 50s \citep{grotschel1993geometric}. Despite its generality and simplicity, the ellipsoid method is known to perform poorly in practice.

A more recent family of algorithms are the celebrated interior point methods, initially developed by Karmarkar in the context of linear programming, and generalized in the seminal work of \citet{nesterov1994interior}. These methods are known to perform well in practice and come with rigorous theoretical guarantees of polynomial running time, but with a significant catch: the underlying constraints must admit an efficiently-computable \emph{self-concordant barrier function}. Barrier functions are defined as satisfying certain differential inequality conditions that facilitate the path-following scheme developed by \citet{nesterov1994interior}, in particular it guarantees that the Newton step procedure maintains feasibility of the iterates. Indeed the iterative path following scheme essentially reduces the optimization problem to the construction of a barrier function, and in many nice scenarios a self-concordant barrier is easy to obtain; e.g., for polytopes the simple logarithmic barrier suffices. Yet up to the present there is no known universal {\it efficient} construction of a barrier that applies to any convex set. The problem is seemingly even more difficult in the \emph{membership oracle model} where our access to $\K$ is given only via queries of the form ``is $x \in \K$?''.

The most recent polynomial time algorithms are random-walk based methods, originally pioneered in the work of \citet{Dyer91} and greatly advanced by \citet{lovasz2006fast}. These algorithms apply in full generality of convex optimization and require only a membership oracle. The state of the art in polynomial time convex optimization is the random-walk based algorithm of simulated annealing and the specific temperature schedule analyzed in the breakthrough of \citet{kalai2006simulated}. Improvements have been given in certain cases, most notably in the work of \citet{narayanan2010random} where barrier functions were utilized.

In this paper we tie together two of the three known methodologies for convex optimization, give an efficiently computable universal barrier for interior point methods, and derive a faster algorithm for convex optimization in the membership oracle model. 
Specifically, 
\begin{enumerate}

\item

We define the {\bf heat path} of a simulated annealing method as the (determinisitc) curve formed by the mean of  the annealing distribution as the temperature parameter is continuously decreased.   We show that the heat path coincides with the {\bf central path} of an interior point algorithm with the entropic universal barrier function. This intimately ties the two major convex optimization methods together and shows they are approximately equivalent over {\it any} convex set.  

We further enhance this connection by showing that the central path following interior point method applied with the universal entropic barrier is a first-order approximation of simulated annealing. %This intimately ties the two major convex optimization methods together and shows they are approximately equivalent over {\it any} convex set. (This is found in Appendix~\ref{sec:reweighting})

\item
Using the connection above, we give an efficient randomized interior point method with an efficiently computable universal barrier for any convex set described by a membership oracle. Previously, efficiently computable barriers were known only for particular convex sets.

\item
We give a new temperature schedule for simulated annealing inspired by interior point methods. This gives rise to an algorithm for general convex optimization with running time of $\tilde{O}(\sqrt{\nu} n^4 )$, where $\nu$ is the self-concordance parameter of the entropic barrier. The previous state of the art is $\tilde{O}(n^{4.5})$ by \cite{kalai2006simulated}. We note that our algorithm does not need explicit access to the entropic barrier, it only appears in the analysis of the temperature schedule.

It was recently shown by \citet{bubeck2014entropic} that the entropic barrier satisfies all require self-concordance properties and that the associated barrier parameter satisfies $\nu \leq n(1 + o(1))$, although this is generally not tight. Our algorithm improves the previous annealing run time by a factor of  $\tilde{O}(\sqrt{\frac{n}{\nu}})$ which in many cases is $o(1)$. For example, in the case of semi-definite programming over matrices in $\reals^{n \times n}$, the entropic barrier is identically the standard log-determinant barrier \cite{guler1996barrier}, exhibiting a parameter $\nu = n$, rather than $n^2$, which an improvement of $n$ compared to the state-of-the-art. More details are given in section \ref{sec:entbarrierhist}.

\end{enumerate}

\paragraph*{The Problem of Convex Optimization}
For the remainder of the paper, we will be considering the following algorithmic optimization problem. Assume we are given access to an arbitrary bounded convex set $\K \subset \reals^n$, and we shall assume without loss of generality that $\K$ lies in a 2-norm ball of radius 1. Assume we are also given as input a vector $\tht \in \reals^n$. Our goal is to solve the following:  
\begin{equation} \label{eq:obj}
\min_{x \in \K} \tht^\top x.
\end{equation}
We emphasize that this is, in a certain sense, the most general convex optimization problem one can pose. While the objective is linear in $x$, we can always reduce non-linear convex objectives to the problem \eqref{eq:obj}. If we want to solve $\min_{x \in \K} f(x)$ for some convex $f : \K \to \reals$, we can instead define a new problem as follows. Letting $\K' := \{ (x,c) \in \K \times \reals : f(x) - c \leq 0 \}$, this non-linear problem is equivalent to solving the linear problem $\min_{\{ (x,c) \in \K'\}} c$. This equivalence is true in in the membership oracle model, since a membership oracle for $\K$ immediately implies an efficient membership oracle for $\K'$.

\subsection{Preliminaries} 
This paper ties together notions from probability theory and convex analysis, most definitions are deferred to where they are first used. We try to follow the conventions of interior point literature as in the excellent text of \citet{nemirovski1996interior}, and the simulated annealing and random-walk notation of \cite{kalai2006simulated}.
 
For some constant $C$, we say a distribution $P$ is $C$-\emph{isotropic} if for any vector $v \in \reals^d$ we have 
$$\frac 1 C \|v \|^2 \leq \mathop{\E}_{X \sim P} [(v^\top X)^2] \leq C \|v\|^2 .$$
Let $P, P'$ be two distributions on $\reals^n$ with means $\mu, \mu'$, respectively. We say $P$ is $C$-isotropic with respect to $P'$ if 
$$\frac 1 C \E_{X \sim P'} [(v^\top X)^2] \leq \E_{X \sim P} [(v^\top X)^2] \leq C \E_{X \sim P'} [(v^\top X)^2] . $$ 
One measure of the distance between two distributions, often referred to as the $\ell_2$ norm, is given by  
$$ \left \| \frac{\mu}{\pi} \right \| \equiv \E_{ x \sim \mu} \left  ( \frac{\mu(x) }{\pi(x) } \right )  = \int_{ x \sim \mu} \left  ( \frac{\mu(x) }{\pi(x) } \right ) d \mu(x).  $$
We note that this distance is not symmetric in general.

For a differentiable convex function $f : \reals^n \to \reals$, the \emph{Bregman divergence} $D_f(x,y)$ between points $x,y \in \text{dom}(f)$ is the quantity 
$$  D_f(x,y) \equiv f(x) - f(y) - \nabla f(y)^\top(x - y) . $$
Further, we can always define the \emph{Fenchel conjugate} (or \emph{Fenchel dual}) \citep{rockafellar1970convex} which is the function $f^*$ defined as 
\begin{equation}
	\textstyle f^*(\theta) := \sup_{x \in \text{dom}(f)} \theta^\top x - f(x).
\end{equation}
It is easy to see that $f^*(\cdot)$ is also convex, and under weak conditions one has $f^{**} = f$. A classic duality result (see e.g. \cite{rockafellar1970convex}) states that when $f^*$ is smooth and strictly convex on its domain and tends to infinity at the boundary, we have a characterization of the gradients of $f$ and $f^*$ in terms of maximizers:
\begin{equation}\label{eq:fenchelgradient}
	\nabla f^*(\theta) = \mathop{\arg\max}_{x \in \text{dom}(f)} \theta^\top x - f(x) \quad \text{ and } \quad 
		\nabla f(x) = \mathop{\arg\max}_{\theta \in \text{dom}(f^*)} \theta^\top x - f^*(\theta).
\end{equation}

\subsection{Structure of this paper}

We start by an overview of random-walk methods for optimization in the next section, and introduce the notion of the {\it heat path} for simulated annealing. 
The following section surveys the important notions from interior point methods for optimization and the entropic barrier function. In section \ref{sec:equivalence} we tie the two approaches together formally by proving that the heat path and central path are the same for the entropic barrier. We proceed to  give a new temperature schedule for simulated annealing as well as prove its convergence properties. In the appendix we describe the Kalai-Vempala methodology for analyzing simulated annealing and its main components for completeness.

\section{An Overview of Simulated Annealing}

Consider the following distribution over the set $\K$ for an arbitrary input vector $\theta \in \reals^n$. 
\begin{equation}
\textstyle
	P_\theta(x) := \frac{\exp(-\theta^\top x)}
	{ \int_K \exp(-\theta^\top x' ) \, dx' }.
\end{equation}
This is often referred to as the \emph{Boltzmann distribution} and is a natural exponential family parameterized by $\theta$. It was observed by \cite{kalai2006simulated} that the optimization objective \eqref{eq:obj} can be reduced to sampling from these distributions. That is, if we choose some scaling quantity $t > 0$, usually referred to as the {\it temperature}, then any sample $X$ from the distribution $P_{\tht/t}$ must be $ n t$-optimal in expectation. More precisely, \cite{kalai2006simulated} show that
\begin{equation} \label{eq:kvapproxerror}
	 \mathop{\mathbb{E}}_{X \sim P_{\tht/t  }}[\tht^\top X] - \min_{x \in \K} \tht^\top x \leq  n t.
\end{equation}

As we show later, our connection implies an even stronger statement, replacing $n$ above by the self-concordant parameter of the entropic barrier, as we will define in the next section equation \eqref{eq:xberror}.

It is quite natural that for small temperature parameter $t \in \reals$, samples from the $P_{\frac{ \theta}{t}}$ are essentially optimal -- the exponential nature of the distribution will eventually concentrate all probability mass on a small neightborhood around the maximizing point $x^* \in \K$. The problem, of course, is that sampling from a point mass around $x^*$ is precisely as hard as finding $x^*$.

This brings us to the technique of so-called \emph{simulated annealing}, originally introduced by \citet{kirkpatrick1983optimization} for solving generic problems of the form $\min_{x \in \K} f(x)$, for arbitrary (potentially non-convex) functions $f$. At a very high level, simulated annealing would begin by sampling from a ``high-entropy'' distribution ($t$ very close to $0$), and then continue by slowly ``turning down the temperature'' on the distribution, i.e. decreasing $t$, which involves sampling according to the pdf $Q_{f,t}(x) \propto \exp(- \frac{1}{t} f(x))$. The intuition behind annealing is that, as long as $t'/t$ is a small constant, then the distributions $Q_{f,t'}$ and $Q_{f,t}$ will be ``close'' in the sense that a random walk starting from the initial distribution $Q_{f,t'}$ will mix quickly towards its stationary distribution $Q_{f,t}$.

Since its inception, simulated annealing is generally referred to as a heuristic for optimization, as polynomial-time guarantees have been difficult to establish. However, the seminal work of \citet{kalai2006simulated} exhibited a poly-time annealing method with formal guarantees for solving linear optimization problems in the form of \eqref{eq:obj}. Their technique possessed a particularly nice feature: the sampling algorithm utilizes a well-studied random walk (Markov chain) known as \har\ \citep{smith1984efficient,lovasz1999hit,lovasz2006fast}, and the execution of this Markov chain requires only access to a \textbf{membership oracle} on the set $\K$. That is, \har\ does not rely on a formal description of $\K$ but only the ability to (quickly) answer queries ``$x \in \K$?'' for arbitrary $x \in \reals^d$.

Let us now describe the \har\ algorithm in detail. We note that this Markov chain was first introduced by \citet{smith1984efficient}, a poly-time guarantee was given by \citet{lovasz1999hit} for uniform sampling, and this was generalized to arbitrary log-concave distributions by \citet{lovasz2003geometry}. \har\ requires several inputs, including: (a) the distribution parameter $\theta$, (b) an estimate of the covariance matrix $\Sigma$ of $P_\theta$, (c) the membership oracle $\Or$, for $\K$, (d) a starting point $X_0$, and (e) the number of iterations $N$ of the random walk.

\begin{algorithm}
	\textbf{Inputs:} parameter vector $\theta$, oracle $\Or$ for $\K$, covariance matrix $\Sigma$, \#iterations $N$, initial $X_0 \in \K$. \\
	\For{$i = 1, 2, \ldots, N$}{
		Sample a random direction $u \sim N(0, \Sigma)$ \label{line:direction}\\
		Querying $\Or$, determine the line segment $R = \{X_{i-1} + \rho u \; : \; \rho \in \reals \} \cap \K$ \label{line:makeray}\\
		Sample a point $X_i$ from $R$ according to the distribution $P_\theta$ restricted to $R$ \label{line:sampleray}
	}
	Return $X_N$\\
\caption{$\har(\theta, \Or, N, \Sigma, X_0)$ for approximately sampling $P_\theta$}
\end{algorithm}

\iffullversion

\begin{figure}[t]
\centering
\includegraphics[width=0.6\textwidth ]{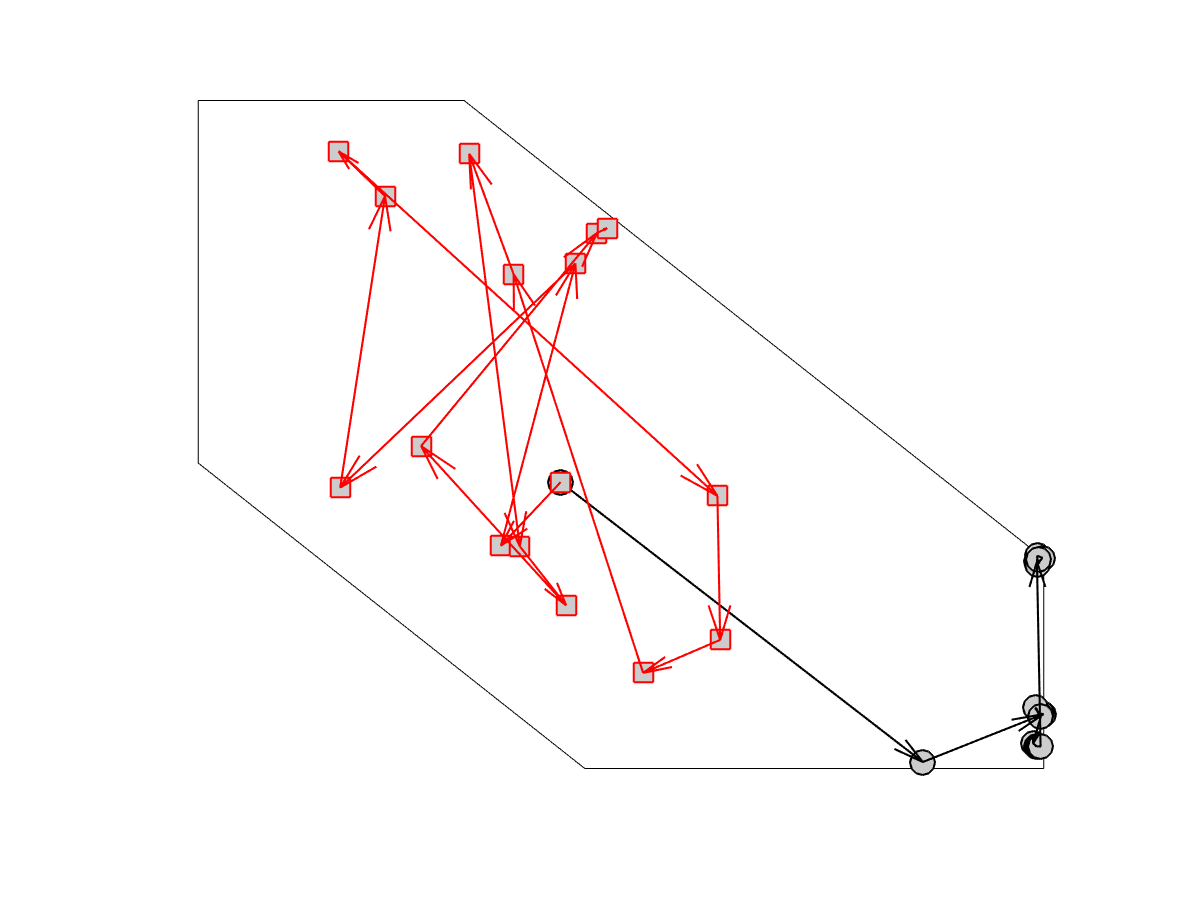}
\caption{The progression of two Hit-and-Run random walks for a high temperature (red squares) and a low temperature (black circles). Notice that at low temperature the walk coverges very quickly to a corner of $\K$.}\label{hitNrun}
\end{figure}

\fi

The first key fact of $\har(\theta)$ is that the stationary distribution of this Markov chain is indeed the desired $P_\theta$; a proof can be found in \cite{vempala2005geometric}. The difficulty for this and many other random walk techniques is to show that the Markov chain ``mixes quickly'', in that the number of steps $N$ needn't be too large as a function of $n$. This issue has been the subject of much research will be discussed below. Before proceeding, we note that a single step of \har\ can be executed quite efficiently. Sampling a random gaussian vector with covariance $\Sigma$ (line \ref{line:direction}) can be achieved by simply sampling a standard gaussian vector $z$ and returning $\Sigma^{1/2}z$. Computing the line segment $R$ (line \ref{line:makeray}) requires simply finding the two locations where the line $\{X_{i-1} + \rho u \; : \; \rho \in \reals \}$ intersects with the boundary of $\K$, but an $\epsilon$-approximation of these points can be found via binary search using $O\left(\log \frac 1 \epsilon\right)$ queries to $\Or$. Sampling from $P_\theta$ restricted to the line segment $R$ can also be achieved efficiently, and we refer the reader to \citet{vempala2005geometric}.

The analysis for simulated annealing in \cite{kalai2006simulated} proceeds by imagining a sequence of distributions $P_{\theta_k}  = P_{{\tht}/{t_k}}$ where $t_1=R$ is the diameter of the set $\K$ and $t_k := \left(1 - \frac 1 {\sqrt{n}}\right)^{k} $. Let $k = O(\sqrt{n}\log \frac n \epsilon )$, then sampling from $P_{\theta_k}$ is enough to achieve the desired optimization guarantee. That is, via Equation~\ref{eq:kvapproxerror}, we see a sample from $P_{\theta_k}$ is $\epsilon$-optimal in expectation.

To sample from $P_{\theta_k}$, \cite{kalai2006simulated} construct a recursive sampling oracle using \har. The idea is that samples from $P_{\theta_{k + 1}}$ can be obtained from a warm start by sampling from $P_{\theta_{k}}$ according to a carefully chosen temperature schedule. The details are given in Algorithm~\ref{alg:sa}.
\begin{algorithm}
% \begin{algorithmic}
Input: temperature schedule $\{t_k, k \in [T]\}$, objective $\tht$.  \\
Set $X_0 = 0$, $\Sigma_1 = I$, $t_1 = R$ \\
\For{$k = 1,...,T$}{
 $\theta_{k} \leftarrow \frac{\tht}{t_k} $   \\
 $X_{k} \leftarrow \har(\theta_k, \Or, N, \Sigma_k, X_{k-1}) $\\
	\For  {$j= 1,..,n$}{
	 $Y^j_{k} = \har(\theta_k, \Or, N, \Sigma_k, Y^j_{k-1}) $
	}
Estimate covariance: $\Sigma_{k+1} := \textnormal{CovMtx}(Y_1^k, \ldots, Y_n^k)$
}
Return $X_T$
% \end{algorithmic}
\caption{\textsc{SimulatedAnnealing with HitAndRun} -- \citet{kalai2006simulated} \label{alg:sa}}
\end{algorithm}

The \citet{kalai2006simulated} analysis leans on a number of technical but crucial facts which they prove. The temperature update schedule that they devise, namely $t_k = (1-\frac{1}{\sqrt{n}}) t_{k-1}$, is shown to satisfy these iterative rules and thus return an approximate solution.

\begin{theorem}[Key result of \citet{kalai2006simulated} and \citet{lovasz2003geometry}]  \label{thm:KVmain}
Fix $k$ and consider the $\har$ walk used in Algorithm \ref{alg:sa} to compute $X_k$ and $Y^j_k$ for each $j$. Assume we choose the temperature schedule in order that
successive distributions $P_{\theta_k}, P_{\theta_{k-1}}$ are close in $\ell_2$:
	\begin{equation} \label{eqn:kv-cond}
\textstyle  \max\left\{  \left\| \frac{P_{\theta_k}}{P_{\theta_{k-1}}} \right\|_2 ,  \left\|  \frac{P_{\theta_{k-1}}}{P_{\theta_{k}}}   \right\|_2 \right\}  \leq 10.
	\end{equation}
Then, as long as the warm start samples $X_{k-1}$ and $Y^j_{k-1}$ are (approximately) distributed according to $P_{\theta_{k-1}}$, the random walk \har\ mixes to $P_{\theta_k}$ with $N = \tilde O( n^3 )$ steps. That is, the output samples $X_k$ and $Y^j_k$ are distributed according to $P_{\theta_k}$ up to error $\leq \epsilon$.
 \end{theorem}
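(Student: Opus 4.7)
The plan is to decompose the claim into two classical ingredients: the Lov\'asz--Vempala mixing-time bound for Hit-and-Run on a log-concave target \citep{lovasz2003geometry}, and the \citet{kalai2006simulated} warm-start-plus-empirical-covariance scheme for simulated annealing. The Lov\'asz--Vempala bound says that if $P$ is a log-concave distribution on $\K$ and we run \har\ using proposal directions drawn from $N(0,\Sigma)$ with $\Sigma$ within a constant spectral factor of the true covariance of $P$ in every direction, then from any initial distribution $\mu_0$ with $\|\mu_0/P\|_2 = O(1)$ the walk mixes to within TV distance $\epsilon$ of $P$ in $N = \tilde O(n^3 \log(1/\epsilon))$ steps. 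Note that the Boltzmann density $P_{\theta_k}(x) \propto \exp(-\theta_k^\top x) \mathbf{1}_{\K}(x)$ is log-concave, so the bound applies directly with $P = P_{\theta_k}$ and $\Sigma = \Sigma_k$. The theorem then reduces to two checks: (a) $X_{k-1}$ is an $O(1)$-warm start for $P_{\theta_k}$, and (b) $\Sigma_k$ is within a constant spectral factor of $\mathrm{Cov}(P_{\theta_k})$.

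For (a), the hypothesis \eqref{eqn:kv-cond} immediately gives $\|P_{\theta_{k-1}}/P_{\theta_k}\|_2 \leq 10$, so a sample exactly distributed as $P_{\theta_{k-1}}$ is a $10$-warm start for $P_{\theta_k}$. The assumed $\epsilon$-closeness of $X_{k-1}$ to $P_{\theta_{k-1}}$ in total variation perturbs this bound by a negligible amount, absorbed into the final error budget. For (b), $\Sigma_k$ is the empirical covariance of $n$ approximate samples from $P_{\theta_{k-1}}$; by a standard matrix-concentration argument for log-concave distributions (used as a black box from \citet{kalai2006simulated}), $\Theta(n)$ samples suffice to estimate the true covariance of $P_{\theta_{k-1}}$ to within a constant spectral factor with high probability. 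Condition \eqref{eqn:kv-cond} simultaneously forces the second moments of $P_{\theta_{k-1}}$ and $P_{\theta_k}$ to be within a constant factor (two distributions with bounded $\ell_2$ ratio have comparable quadratic expectations via a Cauchy--Schwarz argument), so chaining the two comparisons yields $\Sigma_k \asymp \mathrm{Cov}(P_{\theta_k})$, as required.

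The main obstacle I expect is the bootstrapping between approximate warm starts and approximate covariance estimation: both $X_{k-1}$ and each $Y^j_{k-1}$ are themselves outputs of \har\ at the previous temperature and hence only $\epsilon$-close to $P_{\theta_{k-1}}$ in TV, not exact draws from it. I would handle this by propagating TV errors through the Markov kernel via the data-processing inequality, tightening the per-step accuracy to $\epsilon/T$ for $T = \tilde O(\sqrt{n})$, and invoking a union bound so the covariance estimates remain within the constant factor throughout all temperature updates. All of this slackness is absorbed into the $\tilde O(\cdot)$ notation, yielding the stated per-step mixing time $N = \tilde O(n^3)$.
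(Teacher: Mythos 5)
Your proposal is correct and follows essentially the same route as the paper's appendix sketch: both invoke the Lov\'asz--Vempala mixing bound for \har\ as a black box, verify the warm-start condition directly from \eqref{eqn:kv-cond}, and use the fact that a bounded $\ell_2$ ratio transfers near-isotropy (equivalently, a constant-factor covariance approximation) from $P_{\theta_{k-1}}$ to $P_{\theta_k}$, with the empirical covariance handled by concentration. The only cosmetic difference is that the paper phrases the covariance condition via the $S/s = \tilde O(\sqrt{n})$ level-set/second-moment parameters of the LV theorem after an isotropizing change of coordinates, which is equivalent to your spectral-factor formulation.
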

In the appendix we sketch the proof of this theorem for completeness. 
\begin{corollary}
The temperature schedule  $t_k  := \left(1 - 1/\sqrt{n}\right)^{k} t_1 $ satisfies condition \eqref{eqn:kv-cond}, and thus Algorithm \ref{alg:sa} with this schedule returns an $\epsilon$-approximate solution in time $\tilde{O}(n^{4.5})$. 
\end{corollary}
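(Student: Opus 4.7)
The plan is to combine Theorem \ref{thm:KVmain} with two separate verifications: (i) that the geometric schedule $t_k = (1-1/\sqrt{n})^k t_1$ satisfies the $\ell_2$-closeness condition \eqref{eqn:kv-cond}, which allows Theorem \ref{thm:KVmain} to supply a per-phase mixing cost of $\tilde O(n^3)$ steps, and (ii) that the total number of temperatures needed to reach expected error $\epsilon$ via \eqref{eq:kvapproxerror} is $\tilde O(\sqrt{n})$. Multiplying the two then gives the claimed total running time.

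For (i), I would rewrite the ratio through the log-partition function $A(\theta) := \log \int_\K \exp(-\theta^\top x)\,dx$. A direct expansion gives
$$
\left\|\tfrac{P_{\theta_k}}{P_{\theta_{k-1}}}\right\| \;=\; \int_\K \tfrac{P_{\theta_k}(x)^2}{P_{\theta_{k-1}}(x)}\,dx \;=\; \exp\!\bigl(A(\theta_{k-1}) + A(2\theta_k-\theta_{k-1}) - 2A(\theta_k)\bigr),
$$
which is a second symmetric difference of $A$ along the line through $\theta_{k-1}$ and $\theta_k$. Since $\nabla^2 A(\theta) = \Sigma_\theta$ equals the covariance of $P_\theta$, Taylor's theorem bounds the exponent by $\delta^\top \Sigma_\xi \delta$ for some $\xi$ in the segment $[\theta_{k-1},\,2\theta_k-\theta_{k-1}]$, where $\delta := \theta_k-\theta_{k-1}$. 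With the chosen schedule one has $\delta = \theta_k/\sqrt{n}$, so the bound reduces to $\tfrac{1}{n}\,\theta_k^\top \Sigma_\xi \theta_k$. I would close this using the Kalai--Vempala variance estimate $\theta^\top \Sigma_\theta \theta = \mathrm{Var}_{P_\theta}(\theta^\top X) \leq O(n)$ for Boltzmann distributions on bounded convex bodies, together with the fact that neighboring Boltzmann distributions along the schedule are mutually $O(1)$-isotropic so that $\Sigma_\xi$ can be exchanged for $\Sigma_{\theta_k}$ up to a constant. Combining these, the exponent is $O(1)$, and tuning constants gives $\|P_{\theta_k}/P_{\theta_{k-1}}\| \leq 10$; the symmetric bound on $\|P_{\theta_{k-1}}/P_{\theta_k}\|$ is handled identically.

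For (ii), \eqref{eq:kvapproxerror} guarantees expected error at most $\epsilon$ once $n\,t_T \leq \epsilon$. Since $t_1 = R = O(1)$ and $t_k$ decays geometrically at rate $1-1/\sqrt{n}$, we need $T = O\bigl(\sqrt{n}\,\log(nR/\epsilon)\bigr) = \tilde O(\sqrt{n})$ phases. At each phase, Algorithm \ref{alg:sa} runs $n+1$ parallel \har\ walks (one for the answer, $n$ to refresh the covariance estimate), each of length $N = \tilde O(n^3)$ by Theorem \ref{thm:KVmain}, with each \har\ step requiring only polylogarithmically many membership queries. Multiplying yields
$$
\tilde O(T \cdot n \cdot N) \;=\; \tilde O\bigl(\sqrt{n}\cdot n \cdot n^3\bigr) \;=\; \tilde O(n^{4.5}),
$$
as claimed.

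The main obstacle is the variance bound $\theta^\top \Sigma_\theta \theta = O(n)$ underlying step (i); this is the structural heart of the Kalai--Vempala analysis and leans on log-concavity of the Boltzmann density together with the bounded geometry of $\K$. Once this lemma is available, the second-symmetric-difference identity, the Taylor estimate, and the geometric-decay counting are all routine. A secondary subtlety is to show that the sample covariance $\Sigma_{k+1}$ produced from the $n$ warm-start draws is accurate enough to sustain the near-isotropic hypothesis of Theorem~\ref{thm:KVmain}, but this follows from standard concentration for log-concave measures.
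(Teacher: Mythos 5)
Your proposal is correct, and it is in fact more complete than the paper's own proof: the paper only carries out your step (ii) (the counting of phases and the multiplication $T\cdot n\cdot N = \tilde O(\sqrt n\cdot n\cdot n^3)$, which you reproduce essentially verbatim) and simply asserts that the geometric schedule satisfies \eqref{eqn:kv-cond}, implicitly deferring that verification to \citet{kalai2006simulated}. Your step (i) supplies that missing verification, and it does so by the Kalai--Vempala route: the identity $\|P_{\theta_k}/P_{\theta_{k-1}}\| = \exp\bigl(A(\theta_{k-1}) + A(2\theta_k-\theta_{k-1}) - 2A(\theta_k)\bigr)$ is exactly the second-difference/Bregman identity the paper itself uses later for its new schedule, but you then close the bound with the variance estimate $\mathrm{Var}_{P_\theta}(\theta^\top X)\le n$ (which follows from log-concavity of $s\mapsto s^n\int_\K e^{-s\theta^\top x}dx$), whereas the paper's analogous lemma for the $\nu$-dependent schedule instead bounds the Bregman divergences via self-concordance of $A^*_-$ and the Newton decrement. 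Your approach buys a self-contained, barrier-free proof matching the $\tilde O(n^{4.5})$ baseline; the paper's machinery buys the improvement to $\tilde O(\sqrt\nu\, n^4)$. One simplification: since every $\xi$ on the segment $[\theta_{k-1},\,2\theta_k-\theta_{k-1}]$ is a scalar multiple $s\theta_k$ with $s\in[1-1/\sqrt n,\,1+1/\sqrt n]$, the bound $\theta_k^\top\Sigma_{s\theta_k}\theta_k \le n/s^2$ applies directly in the relevant direction, so the isotropy-exchange step you invoke is unnecessary; with it removed the exponent is at most $\tfrac12\bigl(1+(1-1/\sqrt n)^{-2}\bigr)$, which is below $\log 10$ for all but very small $n$ (and the sharper computation via concavity of $A(s\theta)+n\log s$ gives at most $-n\log(1-1/n)\le 1+O(1/n)$, comfortably within the constant $10$).
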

\begin{proof}
By equation \eqref{eq:kvapproxerror}, to achieve $\epsilon$ error it suffice that $\frac{1}{t} \geq \frac{n}{\epsilon}$, or in other words $k$ needs to be  large enough such that $(1 - \frac{1}{\sqrt{n}})^{k} \leq \frac{\epsilon}{n}$ for  which $k = 8 \sqrt{n} \log \frac{n}{\epsilon}$ suffices: $(1 - \frac{1}{\sqrt{n}})^{k}  \leq e^{- \frac{k}{2 \sqrt{n}}} = e^{ - 4 \log \frac{n}{\epsilon} } \leq \frac{\epsilon}{n}$.  
Hence the temperature schedule need be applied with $T = \tilde{O}(\sqrt{n})$ iterations.   Each iteration requires $O(n)$ applications of \har\ that cost $O(n^3)$, for the total running time  of $\tilde{O}(n^{4.5})$.
\end{proof}
In later sections we proceed to give a more refined temperature schedule that satisfies the Kalai-Vempala conditions, and thus results in a faster algorithm. Our temperature schedule is based on new observations in interior point methods, which we describe next.  

\subsection{The heat path for simulated annealing}

Our main result follow from the observation that the path-following interior point method has an analogue in the random walk world. Simulated annealing incorporates a carefully chosen temperature schedule  to reach its objective from a near-uniform distribution. We can think of all temperature schedules as performing a random process whose changing mean is a single well-defined curve. For a given convex set $\K \subseteq \reals^d$ and objective $\tht$, define the heat path as the following set of points, parametrized by the temperature $t \in [0,\infty]$ as follows:
$$  \hp(t)  =   \E_{ x \sim P_{\tht/t}} [x]  $$
We can now define the heat path as $\hp  = \mathop{\cup}_{t \geq 0}  \{ \hp(t) \}$. 
At this point it is not yet clear why this set of points is even a continuous curve in space, let alone equivalent to an analogous notion in the interior point world. We will return to this equivalence in section \ref{sec:equivalence}.

\section{An Overview of Interior Point Methods for Optimization}

Let us now review the vast literature on Interior Point Methods (IPMs) for optimization, and in particular the use of the Iterative Newton Step technique. The first instance of polynomial time algorithms for convex optimization using interior point machinery was the linear programming algorithm of \citet{karmarkar}. The pioneering book of \citet{nesterov1994interior} brought up techniques in convex analysis that allowed for polynomial time algorithms for much more general convex optimization, ideas that are reviewed in great detail and clarity in  \cite{nemirovski1996interior} .

The goal remains the same, to solve the linear optimization problem posed in Equation~\eqref{eq:obj}. The intuition behind IPMs is that iterative update schemes such as gradient descent for solving \eqref{eq:obj} can fail because the boundary of $\K$ can be difficult to manage, and ``moving in the direction of descent'' will fail to achieve a fast rate of convergence. Thus one needs to ``smooth out'' the objective with the help of an additional function. In order to produce an efficient algorithm, a well-suited type of function is known as a \emph{self-concordant barrier} which have received a great deal of attention in the optimization literature.

A self-concordant barrier function $\varphi : \text{int}(\K) \to \reals$, with \emph{barrier parameter}, $\nu$ is a convex function satisfying two differential conditions: for any $h \in \reals^n$ and any $x\in \K$,
\begin{eqnarray}
	\nonumber \nabla^3\varphi[h,h,h] & \leq & 2 (\nabla^2\varphi[h,h])^{3/2}, \text{ and }\\
	\label{eq:barrierdef} \nabla \varphi[h] & \leq & \sqrt{\nu \nabla^2\varphi[h,h]},
\end{eqnarray}
in addition to the property that the barrier should approach infinity when approaching the boundary of $\K$. 
Such function possess very desirable properties from the perspective of optimization, several of which we discuss in the present section. We note an important fact: while the existence of such a function $\varphi$ for general sets $\K$ has been given by \citet{nesterov1994interior}---the \emph{universal barrier} with parameter parameter $\nu = O(n)$, to be discussed further in Section~\ref{sec:entbarrierhist}---an efficient construction of such a function has remained elusive and was considered an important question in convex optimization. This indeed suggests that the annealing results we previously outlined are highly desirable, as \har\ requires only a membership oracle on $\K$. However, one of the central results of the present work is the equivalence between annealing and IPMs, where we show that sampling gives one implicit access to a particular barrier function. This will be discussed at length in Section~\ref{sec:equivalence}.

Let us now assume we are given such a function $\varphi$ with barrier parameter $\nu$. A standard approach to solving \eqref{eq:obj} is to add the function $\varphi(x)$ to the primary objective, scaling the linear term by a ``temperature'' parameter $t > 0$:
\begin{equation} \label{eq:scobj}
	\min_{x \in \K}\;  \{ t  \tht^\top x + \varphi(x) \}.
\end{equation}
As the the temperature $t$ tends to $\infty$ the solution of \eqref{eq:scobj} will tend towards the optimal solution to \ref{eq:obj}. This result is proved for completeness in Theorem~\ref{thm:main1}.

Towards developing in detail the iterative Newton algorithm, let us define the following for every positive integer $k$:
\begin{eqnarray}
	t_k & := & \textstyle \left( 1 + \frac{c}{\sqrt{\nu}} \right)^k \quad \quad \text{for some $c>0$,} \label{eq:alphadef}  \\
	\Phi_k(x) & := & t_k \tht^\top x + \varphi(x) \nonumber \\
	\bx_k & := & \arg\min_x \Phi_k(x)  \nonumber
\end{eqnarray}
As $\varphi$ is a barrier function, it is clear that $\bx_k$ is in the interior of $K$ and, in particular, $\nabla \Phi_k(\bx_k) = 0 \implies \nabla \varphi(\bx_k) = t_k \tht$. It is shown in \cite{nemirovski1996interior} (Equation 3.6) that any $\nu$-SCB (Self-Concordant Barrier) $\varphi$ satisfies $\nabla \varphi(x)^\top(y - x) \leq \nu$, whence we can bound the difference in objective value between $\bx_k$ and the optimal point $x^*$:
\begin{equation}\label{eq:xberror}
	\tht^\top (x^* - \bx_k) = \frac{\nabla \varphi(\bx_k)^\top(x^* - \bx_k)}{t_k}  \leq \frac{\nu}{t_k}.
\end{equation}
We see that the approximation point $\bx_k$ becomes exponentially better as $k$ increases. Indeed, setting $k = \lceil \frac{\sqrt{\nu}}{c}\log(\nu/\epsilon)\rceil$ guarantees that the error is bounded by $\epsilon$.

The iterative Newton's method technique actually involves approximating $\bx_k$ with $\hx_k$, a near-optimal maximizer of $\Phi_k$, at each iteration $k$. For an arbitrary $v \in \reals^n$, $x \in \text{int}(\K)$, and any $k \geq 1$, following \cite{nemirovski1996interior} we define:
\begin{align}
	 \quad \| v \|_x \; := \; & \sqrt{v^\top \nabla^2 \varphi(x) v},
	 	& \quad \quad & \text{the ``local norm'' of $v$ w.r.t $x$;}\\
	\| v \|^*_x \; := \; & \sqrt{v^\top \nabla^{-2} \varphi(x) v},
		& \quad \quad & \text{the corresponding dual norm of $v$,}\\
	\lambda(x,t_k) \; := \; & \| \nabla \Phi_k(x) \|^*_x,
		& \quad \quad & \text{the \emph{Newton decrement} of $x$ for temperature $t_k$.}
 \end{align}
Note that, for a fixed point $x \in K$, the norms $\| \cdot \|_x$ and $\| \cdot \|_x^*$ are dual to each other\footnote{Technically, for $\| \cdot \|_x$ and its dual to be a norm, we need $\nabla^2 \varphi$ to be positive definite and $\varphi$ to be strictly convex. One can verify this is the case for bounded sets, which is the focus of this paper.}. It will turn out that $\lambda(x,t_k)$ will be used both as a quantity in the algorithm, and as a kind of potential that we need to keep small.

In Algorithm~\ref{alg:newton} we describe the damped newton update algorithm, henceforth called \newton. We note that the 
\begin{algorithm}
	\textbf{Input:} $\tht \in \reals^d$, $\K$ and barrier function $\varphi$\\
	\textbf{Solve:} $\hx_0 = \arg\max_{x \in \K} \tht^\top x + \varphi(x)$\\
	\For{$k = 1, 2, \ldots$}{
		$\hx_k \leftarrow \hx_{k-1} - \frac{1}{1 + \lambda(\hx_{k-1},t_k)} \nabla^{-2} \varphi(\hx_{k-1}) \nabla \Phi_{k}(\hx_{k-1})$
	}
\caption{\newton} \label{alg:newton}
\end{algorithm}

The most difficult part of the analysis is in the following two lemmas, which are crucial elements of the \newton\ analysis. A full exposition of these results is found in the excellent survey \cite{nemirovski1996interior}. The first lemma tells us that when we update the temperature, we don't perturb the Newton decrement too much. The second lemma establishes the \emph{quadratic convergence} of the Newton Update for a fixed temperature.
\begin{lemma} \label{lem:tempupdate}
	Let $c$ be the constant chosen in the definition \eqref{eq:alphadef}. Let $t > 0$ be arbitrary and let $t' = t\left(1 + \frac c {\sqrt{\nu}}\right)$. Then for any $x \in \text{int}(\K)$, we have $\lambda(x, t') \leq (1 + c)\lambda(x, t) + c$.
\end{lemma}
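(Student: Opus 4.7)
The plan is to expand the definition of $\lambda(x,t')$ and push the extra factor $t'-t$ through the triangle inequality for the dual norm $\|\cdot\|_x^*$. Recalling that $\nabla \Phi_k(x) = t_k\theta + \nabla\varphi(x)$, I would write
\[
	\lambda(x,t') = \|t'\theta + \nabla\varphi(x)\|_x^* = \|(t\theta + \nabla\varphi(x)) + (t'-t)\theta\|_x^* \leq \lambda(x,t) + (t'-t)\,\|\theta\|_x^*,
\]
and then use $t'-t = (c/\sqrt{\nu})\,t$ to get $\lambda(x,t') \leq \lambda(x,t) + (c/\sqrt{\nu})\, t\|\theta\|_x^*$. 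So the task reduces to controlling $t\|\theta\|_x^*$.

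Next I would bound $t\|\theta\|_x^*$ by applying the triangle inequality in the other direction: since $t\theta = \nabla\Phi_k(x) - \nabla\varphi(x)$, we have
\[
	t\|\theta\|_x^* \leq \|\nabla\Phi_k(x)\|_x^* + \|\nabla\varphi(x)\|_x^* = \lambda(x,t) + \|\nabla\varphi(x)\|_x^*.
\]
The key remaining step is to bound $\|\nabla\varphi(x)\|_x^* \leq \sqrt{\nu}$. This is exactly the content of the second self-concordance inequality \eqref{eq:barrierdef}: for any direction $h$, $\nabla\varphi(x)[h] \leq \sqrt{\nu}\,\|h\|_x$, so taking the sup over $h$ with $\|h\|_x \leq 1$ gives the desired dual-norm bound. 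This is really the only ingredient beyond linear algebra, and it is the step one has to ``know to use''—the main (mild) obstacle in the proof.

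Combining, $t\|\theta\|_x^* \leq \lambda(x,t) + \sqrt{\nu}$, hence
\[
	\lambda(x,t') \leq \lambda(x,t) + \tfrac{c}{\sqrt{\nu}}\bigl(\lambda(x,t) + \sqrt{\nu}\bigr) = \bigl(1 + \tfrac{c}{\sqrt{\nu}}\bigr)\lambda(x,t) + c,
\]
which is at most $(1+c)\lambda(x,t) + c$ as claimed (in fact slightly stronger for $\nu \geq 1$). No assumption on the size of $c$ or on $\lambda(x,t)$ is needed, so the bound holds for every $x \in \mathrm{int}(\K)$.
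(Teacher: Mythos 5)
Your proof is correct and is precisely the standard argument: the paper defers this lemma to the Nemirovski survey it cites, where the same decomposition $\nabla\Phi_{t'}(x) = \nabla\Phi_t(x) + (t'-t)\theta$, the triangle inequality for $\|\cdot\|_x^*$, and the bound $\|\nabla\varphi(x)\|_x^* \leq \sqrt{\nu}$ from the second self-concordance condition \eqref{eq:barrierdef} are used. The only implicit step is $c/\sqrt{\nu} \leq c$ at the end, which requires $\nu \geq 1$; this holds for every nondegenerate self-concordant barrier, so the argument is complete.
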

\begin{lemma}\label{lem:quadraticconvergence}
	Let $k$ be arbitrary and assume we have some $\hx_{k-1}$ such that $\lambda(\hx_{k-1},t_{k})$ is finite. The Newton update $\hx_{k}$ satisfies $\lambda(\hx_k,t_k) \leq 2 \lambda^2(\hx_{k-1},t_{k})$.
\end{lemma}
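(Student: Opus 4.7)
The plan is to specialize the classical damped-Newton convergence analysis of Nesterov--Nemirovski to $\Phi_k$ and simply read off the constant $2$. The first observation is that $\Phi_k$ inherits self-concordance from $\varphi$: the linear term $t_k \tht^\top x$ contributes nothing to the Hessian or to the third-derivative tensor, so $\nabla^2 \Phi_k \equiv \nabla^2 \varphi$ and $\nabla^3 \Phi_k \equiv \nabla^3 \varphi$. In particular the local norms $\|\cdot\|_x$ and $\|\cdot\|_x^*$, the Dikin-ellipsoid containment, and the standard Hessian-stability estimates for $\varphi$ all transfer to $\Phi_k$ unchanged.

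Write $\lambda := \lambda(\hx_{k-1},t_k)$, let $p := -\nabla^{-2}\varphi(\hx_{k-1})\nabla\Phi_k(\hx_{k-1})$ be the pure Newton direction, and let $\Delta := \hx_k - \hx_{k-1} = \frac{1}{1+\lambda}p$ be the actual step taken by \newton. Duality of the local norms gives $\|p\|_{\hx_{k-1}} = \|\nabla\Phi_k(\hx_{k-1})\|_{\hx_{k-1}}^* = \lambda$, so the step length $r := \|\Delta\|_{\hx_{k-1}} = \lambda/(1+\lambda)$ lies strictly inside the unit Dikin ball at $\hx_{k-1}$. This strict inclusion is exactly what is needed to (i)~guarantee $\hx_k \in \text{int}(\K)$ and (ii)~activate the standard self-concordance Hessian-comparison inequality
\[
(1-sr)^2\,\nabla^2\varphi(\hx_{k-1}) \;\preceq\; \nabla^2\varphi(\hx_{k-1}+s\Delta) \;\preceq\; (1-sr)^{-2}\,\nabla^2\varphi(\hx_{k-1}) \qquad \text{for all } s \in [0,1].
\]

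To close the argument I would use the fundamental theorem of calculus to write $\nabla\Phi_k(\hx_k) = \nabla\Phi_k(\hx_{k-1}) + \int_0^1 \nabla^2\varphi(\hx_{k-1}+s\Delta)\,\Delta\,ds$, substitute the defining identity $\nabla\Phi_k(\hx_{k-1}) = -(1+\lambda)\,\nabla^2\varphi(\hx_{k-1})\,\Delta$ so that the $O(\lambda)$ pieces cancel, and bound the remaining residual $\int_0^1 \bigl(\nabla^2\varphi(\hx_{k-1}+s\Delta) - \nabla^2\varphi(\hx_{k-1})\bigr)\Delta\,ds \;-\; \lambda\,\nabla^2\varphi(\hx_{k-1})\Delta$ in the dual norm at $\hx_{k-1}$ using the Hessian-comparison inequality above, then convert to the target norm $\|\cdot\|_{\hx_k}^*$ at the cost of one more factor $(1-r)^{-1}$. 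Collecting powers of $r = \lambda/(1+\lambda)$ produces the claimed bound $\lambda(\hx_k,t_k) \leq 2\lambda^2$. The main obstacle is precisely this last bookkeeping: one must verify that after all the $(1-r)^{\pm 2}$ and $(1-r)^{-1}$ factors are multiplied through and the residual is re-expressed in the norm at $\hx_k$, the constant in front of $\lambda^2$ really is at most $2$. There is no conceptual subtlety beyond self-concordance itself, and the computation is carried out explicitly in \cite{nemirovski1996interior}.
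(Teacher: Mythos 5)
Your outline is correct and matches the paper's treatment exactly: the paper gives no proof of this lemma, deferring entirely to the damped-Newton analysis in \cite{nemirovski1996interior}, which is precisely the argument you sketch. The bookkeeping does close as claimed --- with $r=\lambda/(1+\lambda)$ the residual bound in the dual norm at $\hx_{k-1}$ is $\frac{r^2}{1-r}+\lambda r=\frac{2\lambda^2}{1+\lambda}$, and the conversion to the norm at $\hx_k$ costs a factor $(1-r)^{-1}=1+\lambda$, giving exactly $2\lambda^2$.
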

The previous two lemmas can be combined to show that the following invariant is maintained. Neither the constant bound of $1/3$ on the Newton decrement nor the choice of $c = 1/20$ are particularly fundamental; they are convenient for the analysis but alternative choices are possible.
\begin{lemma}\label{lem:lambdabound}
	Assume we choose $c = 1/20$ for the parameter in \eqref{eq:alphadef}. Then for all $k$ we have $\lambda(\hx_k,t_k) < \frac 1 3$.
\end{lemma}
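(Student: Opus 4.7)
The plan is to prove Lemma \ref{lem:lambdabound} by induction on $k$, with the two preceding lemmas supplying the entire analytic content. The base case $k = 0$ is immediate: $\hx_0$ is defined as the exact optimizer of $\Phi_0$, so $\nabla \Phi_0(\hx_0) = 0$, and hence $\lambda(\hx_0, t_0) = 0 < 1/3$.

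For the inductive step, assume $\lambda(\hx_{k-1}, t_{k-1}) < 1/3$. I would chain the two lemmas in the natural order dictated by the algorithm: first the temperature is incremented from $t_{k-1}$ to $t_k$ at the unchanged iterate $\hx_{k-1}$, then the damped Newton step moves from $\hx_{k-1}$ to $\hx_k$ at temperature $t_k$. Applying Lemma \ref{lem:tempupdate} with $c = 1/20$ bounds the Newton decrement after the temperature change by
\[
\lambda(\hx_{k-1}, t_k) \;\leq\; (1 + 1/20)\cdot(1/3) + 1/20 \;=\; 2/5.
\]
Applying Lemma \ref{lem:quadraticconvergence} at temperature $t_k$ then gives
\[
\lambda(\hx_k, t_k) \;\leq\; 2\,\lambda(\hx_{k-1}, t_k)^2 \;\leq\; 2(2/5)^2 \;=\; 8/25 \;<\; 1/3,
\]
which closes the induction.

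There is no real obstacle here beyond checking that the numerical constants line up. The content of the choice $c = 1/20$ is precisely that the one-dimensional map $\lambda \mapsto 2\bigl((1+c)\lambda + c\bigr)^2$ sends the interval $[0, 1/3]$ into itself, so that $1/3$ is a forward-invariant threshold for the Newton decrement under the coupled temperature-plus-Newton iteration. The threshold $1/3$ is of course not sacred; any $\tau$ with $2((1+c)\tau + c)^2 \leq \tau$ would serve equally well, at the price of adjusting $c$ and thereby the overall iteration count.
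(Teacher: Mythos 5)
Your proof is correct and follows essentially the same route as the paper's: induction with base case $\lambda(\hx_0,t_0)=0$, then chaining Lemma~\ref{lem:tempupdate} and Lemma~\ref{lem:quadraticconvergence} to get $2(2/5)^2 = 8/25 < 1/3$, exactly matching the paper's computation $2(0.4)^2 < 1/3$. Your closing remark about $1/3$ being a forward-invariant threshold for the map $\lambda \mapsto 2((1+c)\lambda + c)^2$ is a nice articulation of what the paper only hints at when it says the constants are ``convenient for the analysis but alternative choices are possible.''
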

\begin{proof}We give a simple proof by induction. The base case is satisfied since we assume that $\lambda(\hx_{0},t_{0}) = 0$, as $t_0 = 1$.\footnote{As stated, Algorithm~\ref{alg:newton} requires finding the minimizer of $\varphi(\cdot)$ on $\K$, but this is not strictly necessary. The convergence rate can be established as long as a ``reasonable'' initial point $\hx_{0}$ can be computed---e.g. it suffices that $\lambda(\hx_0,1) < 1/2$.} For the inductive step, assume $\lambda(\hx_{k-1},t_{k-1}) < 1/3$. Then
\begin{eqnarray*}
	\lambda(\hx_{k},t_k) & \leq & 2\lambda^2(\hx_{k-1},t_k)\\
	& \leq & 2((1 + c)\lambda(\hx_{k-1},t_{k-1}) + c)^2  < 2(0.4)^2 < 1/3.
\end{eqnarray*}
The first inequality follows by Lemma~\ref{lem:quadraticconvergence} and the second by Lemma~\ref{lem:tempupdate}, hence we are done.
\end{proof}
\begin{theorem} \label{thm:main1}
	Let $x^*$ be a solution to the objective \eqref{eq:obj}. For every $k$, $\hx_k$ is an $\epsilon_k$-approximate solution to \eqref{eq:obj}, where $\epsilon_k = \frac{\nu + \sqrt{\nu}/4}{t_k}$.  In particular, for any $\epsilon > 0$, as long as $k > \frac{\sqrt{\nu}}{c}\log(2\nu/\epsilon)$ then  $\hx_k$ is an $\epsilon$-approximation solution.
\end{theorem}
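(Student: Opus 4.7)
The plan is to decompose the error through the exact minimizer $\bx_k$ of $\Phi_k$, bound each piece with a different self-concordance property, and then invert the exponential growth of $t_k$.

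First, split
\[
\tht^\top \hx_k - \tht^\top x^* \;=\; \tht^\top(\bx_k - x^*) \;+\; \tht^\top(\hx_k - \bx_k).
\]
The first summand is $\leq \nu/t_k$ directly from \eqref{eq:xberror}. The task therefore reduces to showing the Newton-update error satisfies $\tht^\top(\hx_k - \bx_k) \leq \sqrt{\nu}/(4 t_k)$.

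Second, I would exploit the first-order optimality $\nabla \Phi_k(\bx_k) = 0$, which gives the identity $t_k\tht = -\nabla \varphi(\bx_k)$. Substituting and applying the local/dual-norm Cauchy--Schwarz inequality,
\[
t_k \tht^\top(\hx_k - \bx_k) \;=\; \nabla \varphi(\bx_k)^\top (\bx_k - \hx_k) \;\leq\; \|\nabla\varphi(\bx_k)\|^*_{\bx_k} \cdot \|\hx_k - \bx_k\|_{\bx_k}.
\]
The dual form of the barrier inequality \eqref{eq:barrierdef} bounds the first factor by $\sqrt{\nu}$. So the entire problem collapses to showing $\|\hx_k - \bx_k\|_{\bx_k} \leq 1/4$, after which dividing by $t_k$ finishes the error bound.

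Third, for this local-norm estimate I would invoke a standard self-concordance ``region of quadratic convergence'' bound from \cite{nemirovski1996interior}: whenever $\lambda := \lambda(\hx_k, t_k)$ is strictly less than $1$, the minimizer satisfies $\|\hx_k - \bx_k\|_{\bx_k} \leq \lambda/(1-\lambda)$. Lemma~\ref{lem:lambdabound} gives $\lambda < 1/3$, and the inductive computation in its proof actually delivers the sharper $\lambda \leq 2(0.4)^2 = 0.32$; combined with a slight tightening either of the invariant in Lemma~\ref{lem:lambdabound} or of the generic SC deviation bound, this yields $\|\hx_k - \bx_k\|_{\bx_k} \leq 1/4$. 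The bookkeeping of these constants is the only real friction: the structural route is entirely standard, but pinning down exactly the coefficient $\sqrt{\nu}/4$ claimed in the statement demands a slightly sharper form of Lemmas~\ref{lem:tempupdate}--\ref{lem:lambdabound} than presented.

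Finally, the ``In particular'' clause is routine: using $\nu+\sqrt{\nu}/4 \leq 2\nu$, it suffices that $t_k \geq 2\nu/\epsilon$. With $t_k = (1+c/\sqrt{\nu})^k$ and the elementary bound $\log(1+c/\sqrt{\nu}) \geq c/\sqrt{\nu}\cdot(1-o(1))$ in the relevant range of $c$ and $\nu$, the requirement $k\,c/\sqrt{\nu} \geq \log(2\nu/\epsilon)$ gives exactly $k > (\sqrt{\nu}/c)\log(2\nu/\epsilon)$, as stated.
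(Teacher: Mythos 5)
Your proof follows exactly the paper's own argument: the same decomposition through $\bx_k$, the bound \eqref{eq:xberror} for the first term, and for the second term Cauchy--Schwarz in the local norm combined with $\|\nabla\varphi(\bx_k)\|^*_{\bx_k}\le\sqrt{\nu}$ and the standard estimate $\|\hx_k-\bx_k\|_{\bx_k}\le\lambda/(1-\lambda)$. Your remark about the constant is apt --- from $\lambda<1/3$ one only gets $\lambda/(1-\lambda)<1/2$ rather than $1/4$, a looseness the paper's proof silently shares --- but this affects only the constant in $\epsilon_k$, not the substance of the theorem.
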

\begin{proof} Let $k$ be arbitrary. Then,
	\begin{eqnarray*} 
		\tht^\top( \hx_k - x^*) & = & \tht^\top(\bx_k - x^*) + \tht^\top(\hx_k - \bx_k) \\
		\text{\tiny (By \eqref{eq:xberror})} \hspace{1in} 
			& \leq & \textstyle  \frac{\nu}{t_k} + \tht^\top(\hx_k - \bx_k) \\
		\text{\tiny (H\"older's Inequality)} \hspace{1in} 
			& \leq & \textstyle  \frac{\nu}{t_k} + \|\tht\|_{\bx_k}^* \|\bx_k - \hx_k\|_{\bx_k} \\
		\text{\tiny (\cite{nemirovski1996interior} Eqn. 2.20)} \hspace{1in} 
			& \leq & \textstyle  \frac{\nu}{t_k} + \|\tht\|_{\bx_k}^* \frac{\lambda(\hx_k, t_k)}{1 - \lambda(\hx_k, t_k)} \\
		\text{\tiny (Applying Lemma~\ref{lem:lambdabound} and \eqref{eqn:grad-barrier})} \hspace{1in}
			& \leq & \textstyle  \frac{\nu}{t_k} + \left\|\frac {\nabla \varphi(\bx_k)}{t_k}\right\|_{\bx_k}^* \frac 1 4  \quad \leq \quad \frac{\nu + \sqrt{\nu}/4}{t_k}
	\end{eqnarray*}
The last equation utilizes a fact that derives immediately from the definition \eqref{eq:barrierdef}, namely
\begin{equation} \label{eqn:grad-barrier}
	\| \nabla \varphi^*(x) \|_x^* = \| \nabla \varphi^*(x) \|_{\theta(x)} \leq \sqrt{\nu}
\end{equation}
holds for any SCBF $\varphi$ with parameter $\nu$, and for any $x \in \K$.
\end{proof}
We proceed to give a specific barrier function that applies to any convex set and gives rise to an efficient algorithm.

\section{The Equivalence of IterativeNewton and SimulatedAnnealing} \label{sec:equivalence}

We now show that the previous two techniques, Iterative Newton's Method and Simulated Annealing, are in a certain sense two sides of the same coin. In particular, with the appropriate choice of barrier function $\varphi$ the task of computing the sequence of Newton iterates $\hx_1, \hx_2, \ldots$ may be viewed precisely as estimating the means for each of the distributions $P_{\theta_1}, P_{\theta_2}, \ldots$.  This correspondence helps to unify two very popular optimization methods, but also provides three additional novel results:
\begin{enumerate}
	\item We show that the heat path for simulated annealing is equivalent to the central path with the entropic barrier. 

	\item We show that the running time of Simulated Annealing can be improved to $O(n^4\sqrt{\nu})$ from the previous best of $O(n^{4.5})$. In the most general case we know that $\nu = O(n)$, but there are many convex sets in which the parameter $\nu$ is significantly smaller. Notably such is the case for the positive-semi-definite cone, which is the basis of semi-definite programming and a cornerstone of many approximation algorithms. Further examples are surveyed in section \ref{sec:entbarrierhist}.
	
	\item We show that Iterative Newton's Method, which previously required a provided barrier function on the set $\K$, can now be executed efficiently where the only access to $\K$ is through a membership oracle. This method relies heavily on previously-developed sampling techniques \citep{kalai2006simulated}. Discussion is deferred to Appendix \ref{sec:ipm-sampling}.
\end{enumerate}

\iffullversion

\else

\textbf{The full exposition of these results can be found in the full version of the paper}

\fi

\subsection{The Duality of Optimization and Sampling} \label{sec:duality}

\iffullversion

We begin by rewriting our Boltzmann distribution for $\theta$ in exponential family form,
\begin{equation} \label{eq:redefptheta}
	\textstyle P_\theta(x) := \exp(- \theta^\top x - A(\theta)) \quad \text{ where } \quad A(\theta) := \log \int_K \exp(-\theta^\top x')dx'.
\end{equation}
The function $A(\cdot)$ is known as the \emph{log partition function} of the exponential family, and it has several very natural properties in terms of the mean and variance of $P_\theta$:
\begin{eqnarray}
	\nabla A(\theta) & = & - \E_{X \sim P_\theta}[X] \label{eq:deriv_expectation} \\
	\nabla^2 A(\theta) & = & \E_{X \sim P_\theta}[(X - \E X)(X - \E X)^\top].
\end{eqnarray}
We can also appeal to Convex (Fenchel) Duality \citep{rockafellar1970convex} to obtain the conjugate
\begin{equation} \label{eq:fenchelA}
	\textstyle A^*(x) := \sup_{\theta} \theta^\top x - A(\theta).
\end{equation}
It is easy to establish that $A^*$ is smooth and strictly convex. The domain of $A^*(\cdot)$ is precisely the space of gradients of $A(\cdot)$, and it is straightforward to show that this is the set $\text{int}(-\K)$, the interior of the reflection of $\K$ about the origin. However we want a function defined on (the interior of) $\K$, not its reflection, so let us define a new function $A^*_-(x) := A^*(-x)$ whose domain is $\text{int}(\K)$. We now present a recent result of \citet{bubeck2014entropic}.
\begin{theorem}[\cite{bubeck2014entropic}]
	The function $A^*_-$ is a $\nu$-self-concordant barrier function on $\K$ with $\nu \leq n(1 + o(1))$.
\end{theorem}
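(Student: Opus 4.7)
The plan is to express $A^*_-$ through Fenchel duality in terms of the exponential family $P_\theta$, and then verify in turn each of the three ingredients of a self-concordant barrier: blow-up at $\partial\K$, the self-concordance inequality, and the bound on the barrier parameter.

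First I would compute the derivatives. Applying \eqref{eq:fenchelgradient} to $A$, the first-order condition at the sup defining $A^*(y)$ reads $\nabla A(\theta) = y$, which by \eqref{eq:deriv_expectation} is $\E_{P_\theta}[X] = -y$. Substituting $y = -x$ and letting $\theta_x$ denote the unique natural parameter with $\E_{P_{\theta_x}}[X] = x$, a short chain-rule calculation yields
\begin{equation*}
 \nabla A^*_-(x) \; = \; -\theta_x, \qquad \nabla^2 A^*_-(x) \; = \; \Sigma_{\theta_x}^{-1},
\end{equation*}
where $\Sigma_\theta := \mathrm{Cov}_{P_\theta}(X)$. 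Since $\|\theta_x\| \to \infty$ as $x \to \partial\K$ (otherwise $x$ could not lie on the boundary of the set of means of $P_\theta$), strict convexity forces $A^*_-(x) \to \infty$ along any boundary approach, taking care of the barrier property.

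Next I would establish the self-concordance inequality. By the Fenchel-duality principle for self-concordance (see \cite{nemirovski1996interior}), it suffices to show $A$ itself is $1$-self-concordant on $\reals^n$. A direct computation gives $\nabla^3 A(\theta)[h,h,h] = -\E_{P_\theta}[(h^\top(X - \E X))^3]$, the third central moment of $h^\top X$ under $P_\theta$. Because $P_\theta$ is the product of $e^{-\theta^\top x}$ with the indicator of a convex body, it is log-concave, and hence every one-dimensional marginal $h^\top X$ is log-concave on $\reals$. The sharp third-moment inequality for log-concave variables (tight on the one-sided exponential) yields $|\E[(Z-\mu)^3]| \leq 2\sigma^3$ with $\sigma^2 = \nabla^2 A(\theta)[h,h]$, which is exactly the self-concordance inequality \eqref{eq:barrierdef}.

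Finally I would bound the barrier parameter. Using the expressions above, the inequality $(\nabla A^*_-(x)^\top h)^2 \leq \nu \, \nabla^2 A^*_-(x)[h,h]$ is, after supremizing in $h$ via the dual norm $\Sigma_{\theta_x}$, equivalent to
\begin{equation*}
 \theta_x^\top \Sigma_{\theta_x} \theta_x \; = \; \mathrm{Var}_{P_{\theta_x}}\!\bigl(\theta_x^\top X\bigr) \; \leq \; \nu.
\end{equation*}
The problem thus reduces to uniformly bounding the variance of the one-dimensional pushforward $\theta^\top X$ under $P_\theta$. The main obstacle is attaining the tight constant $n(1+o(1))$: in one dimension with $\K = [0,L]$ and $\theta \to \infty$, $P_\theta$ is a truncated exponential and $\mathrm{Var}(\theta X) \to 1$, suggesting a ``one unit of variance per coordinate'' heuristic, but pushing the multidimensional case from a crude $O(n)$ estimate down to $n(1+o(1))$ requires the refined analysis of one-dimensional marginals of log-concave measures on convex bodies that constitutes the technical contribution of \cite{bubeck2014entropic}; the resulting constant is essentially optimal in view of Nesterov-Nemirovski's universal lower bound $\nu \geq n$.
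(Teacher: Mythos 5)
The paper does not actually prove this theorem: it is quoted directly from \cite{bubeck2014entropic}, and the only internal commentary is the remark in Section~\ref{sec:entbarrierhist} that self-concordance with parameter $O(n)$ already follows from conjugating the universal barrier (Nesterov--Nemirovskii, Theorem 2.4.1), with the sharp constant $n(1+o(1))$ being the contribution of Bubeck and Eldan. Measured against the actual Bubeck--Eldan argument, your outline is a faithful reconstruction: the identities $\nabla A^*_-(x) = -\theta_x$ and $\nabla^2 A^*_-(x) = \Sigma_{\theta_x}^{-1}$ are correct; reducing self-concordance of $A^*_-$ to self-concordance of the log-partition function $A$ via conjugation is exactly their route; identifying $\nabla^3 A(\theta)[h,h,h]$ with (minus) the third central moment of the log-concave marginal $h^\top X$ and invoking the sharp bound $|\E[(Z-\mu)^3]| \le 2\sigma^3$ is their key lemma; and the reduction of the barrier parameter to $\sup_x \mathrm{Var}_{P_{\theta_x}}(\theta_x^\top X) \le \nu$ is precisely how the $n(1+o(1))$ bound is obtained. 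You explicitly defer the two hard technical steps to the cited work, which is appropriate given that the statement is itself an imported result.

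One step as written is invalid: you infer the boundary blow-up of $A^*_-$ from $\|\nabla A^*_-(x)\| = \|\theta_x\| \to \infty$ plus strict convexity. Gradient blow-up does not imply value blow-up for a convex function --- $f(x) = -\sqrt{1-x^2}$ on $(-1,1)$ is strictly convex with $|f'| \to \infty$ at the endpoints yet stays bounded. The standard repair is to observe that $A^*$, being a Fenchel conjugate, is a closed convex function with open domain $\mathrm{int}(-\K)$, and a closed self-concordant function automatically tends to $+\infty$ along any sequence approaching the boundary of its domain; so the barrier property should be derived \emph{after} self-concordance is established, not before it. A minor cosmetic point: $\nu \ge n$ is not a universal lower bound over all convex bodies (the Euclidean ball admits an $O(1)$-barrier); it holds for specific bodies such as the simplex and the cube, which is what makes the $n(1+o(1))$ bound unimprovable in general.
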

One of the significant drawbacks of barrier/Newton techniques is the need for a readily-available self-concordant barrier function. In their early work on interior point methods, \citet{nesterov1994interior} provided such a function, often referred to as the ``universal barrier'', yet the actual construction was given implicitly without oracle access to function values or derivatives. \citet{bubeck2014entropic} refer to function $A^*_-(\cdot)$ as the \emph{entropic barrier}, a term we will also use, as it relates to a notion of differential entropy of the exponential family of distributions.

It is important to note that, when our set of interest is a cone $K$, the entropic barrier on $K$ corresponds exactly to the Fenchel dual of the universal barrier on the dual cone $K^*$ \cite{guler1997self}, which immediately establishes the self-concordance. Indeed, many beautiful properties of the entropic barrier on cones have been developed, and for a number of special cases $A^*_-(\cdot)$ corresponds exactly to the canonical barrier used in practice; e.g. $A^*_-(\cdot)$ on the PSD cone corresponds to the log-determinant barrier. In many such cases one obtains a much smaller barrier parameter $\nu$ than the $n(1 + o(1))$ bound. We defer a complete discussion to Section~\ref{sec:entbarrierhist}.

In order to utilize $A^*_-(\cdot)$ as a barrier function as in \eqref{eq:scobj} we must be able to approximately solve objectives of the form $\min_{x \in \K} \{ \theta^\top x + A^*_-(x) \}$. One of the key observations of the paper, given in the following Proposition, is that solving this objective correponds to computing the mean of the distribution $P_\theta$.
 % the actual $\arg\max$imizer in Equation~\ref{eq:fenchelA} corresponds exactly to the gradient $\nabla A(\theta)$. Combining this with Equation~\ref{eq:deriv_expectation} immediately gives the following.
\begin{proposition} \label{prop:connection}
	Let $\theta \in \reals^n$ be arbitrary, and let $P_\theta$ be defined as in \eqref{eq:redefptheta}. Then we have
	\begin{equation}
		\E_{X \sim P_\theta}[X] = \mathop{\arg\min}_{x \in \textnormal{int}(\K)} \left\{ \theta^\top x + A^*_-(x) \right\}.
	\end{equation}
\end{proposition}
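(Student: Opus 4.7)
The plan is to prove the claim by finding the stationary point of the objective $\theta^\top x + A^*_-(x)$ and showing it equals $\E_{X \sim P_\theta}[X]$, then invoking strict convexity to conclude this critical point is the unique minimizer.

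First I would write out the gradient of the objective. Since $A^*_-(x) = A^*(-x)$, the chain rule gives $\nabla A^*_-(x) = -\nabla A^*(-x)$, so setting the gradient of $\theta^\top x + A^*_-(x)$ to zero yields the first-order condition
\begin{equation*}
  \theta = \nabla A^*(-x).
\end{equation*}
Next, I would invoke the Fenchel duality relation \eqref{eq:fenchelgradient}. Since $A$ is smooth and strictly convex (its Hessian is the covariance matrix of $P_\theta$, which is positive definite whenever $\K$ has nonempty interior), we have $A^{**} = A$, and the maps $\nabla A$ and $\nabla A^*$ are inverses of each other on their respective domains. Consequently, $\theta = \nabla A^*(-x)$ is equivalent to $-x = \nabla A(\theta)$. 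Plugging in the identity \eqref{eq:deriv_expectation}, namely $\nabla A(\theta) = -\E_{X \sim P_\theta}[X]$, gives $x = \E_{X \sim P_\theta}[X]$, exactly as desired.

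Finally I would justify that this critical point is in fact the unique minimizer on $\textnormal{int}(\K)$. The function $A^*_-$ is a self-concordant barrier (by the Bubeck--Eldan theorem stated just above), hence strictly convex on the interior of $\K$ and tending to $+\infty$ at the boundary. Adding the linear term $\theta^\top x$ preserves strict convexity and coercivity on $\textnormal{int}(\K)$, so the minimizer is unique and coincides with the unique stationary point found above.

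The only real obstacle is the mild technical issue of ensuring that $-x$ actually lies in the domain of $\nabla A^*$ so that the Fenchel inversion applies; this is handled by noting that $\textnormal{dom}(A^*) = \textnormal{int}(-\K)$ (mentioned in the text just before the proposition), so for any $x \in \textnormal{int}(\K)$ we have $-x \in \textnormal{dom}(A^*)$, and the inversion $\nabla A \circ \nabla A^* = \textnormal{id}$ is valid throughout. Everything else is a direct application of standard exponential family identities and Fenchel duality.
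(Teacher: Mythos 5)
Your proof is correct and follows essentially the same route as the paper: both arguments reduce to the Fenchel duality relation between $\nabla A$ and $\nabla A^*$ (the paper invokes the argmax characterization \eqref{eq:fenchelgradient} directly, while you re-derive it via the first-order condition and the inverse-gradient property), combined with the reflection handling $A^*_-$ and the identity $\nabla A(\theta) = -\E_{X\sim P_\theta}[X]$. Your additional remarks on strict convexity and the domain $\textnormal{dom}(A^*)=\textnormal{int}(-\K)$ just make explicit the hypotheses under which \eqref{eq:fenchelgradient} applies, so no substantive difference.
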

\begin{proof}
	Let $\theta$ be an arbitrary input vector. As we mentioned in \eqref{eq:fenchelgradient}, standard Fechel duality theory gives us
	\[
		\nabla A(\theta) = \mathop{\arg\max}_{x \in \text{dom}(A^*)} \left\{  \theta^\top x - A^*(x) \right\} = \mathop{\arg\min}_{x \in \text{dom}(A^*)}  \left\{ -\theta^\top x + A^*(x) \right\} .
	\]
	It can be verified that the domain of $A^*$ is precisely the interior of $-\K$, the reflection of $\K$. Thus we have
	\[
		\nabla A(\theta) = \mathop{\arg\min}_{x \in \text{int}(-\K)} \left\{ -\theta^\top x + A^*(x) \right\} = - \left(\mathop{\arg\min}_{x \in \text{int}(\K)} \left\{ \theta^\top x + A^*_-(x) \right\} \right).
	\]
	In addition, we noted in \eqref{eq:redefptheta} that $\nabla A(\theta) = - \E_{X \sim P_\theta}[X]$. Combining the latter two facts gives the result.
\end{proof}
We now have a direct connection between sampling methods and barrier optimization. For the remainder of this section, we shall assume that our chosen $\varphi(\cdot)$ is the entropic barrier $A^*(\cdot)$, and the quantities $\Phi_k(\cdot), \| \cdot \|_x, \lambda(\cdot,\cdot)$ are defined accordingly. We shall also use the notation $x(\theta) := \E_{X \sim P_\theta}[X] = -\nabla A(\theta)$. 
\begin{lemma} \label{lem:divdec}
	Let $\theta,\theta'$ be such that $\|x(\theta') - x(\theta)\|_{x(\theta)} \leq \frac 1 2$. Then we have 
	\begin{equation}
		\textstyle
		% \frac 1 2 \|x(\theta') - x(\theta)\|^2_{x(\theta)}
		D_{A^*_-}(x(\theta'), x(\theta))
		= \text{KL}(P_\theta',P_\theta)
		= D_A(\theta,\theta')
		\leq 2 \|x(\theta') - x(\theta)\|^2_{x(\theta)}
	\end{equation}
\end{lemma}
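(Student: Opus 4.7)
\medskip

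\noindent\textbf{Proof plan.} The statement bundles three assertions---two equalities and one inequality---and the cleanest route is to attack them in the order \emph{KL = $D_A$}, then \emph{$D_A$ = $D_{A^*_-}$}, then the quadratic upper bound. I will work in exponential-family form \eqref{eq:redefptheta} throughout and use $\nabla A(\theta)=-x(\theta)$ from \eqref{eq:deriv_expectation}.

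\medskip

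\noindent\textbf{Step 1 (KL = $D_A$, direct calculation).} The plan is to just unfold the KL divergence. Since $\log P_\theta(x)=-\theta^\top x - A(\theta)$, one gets
\[
\mathrm{KL}(P_{\theta'},P_\theta)=\E_{X\sim P_{\theta'}}\!\bigl[(\theta-\theta')^\top X\bigr]+A(\theta)-A(\theta')
=(\theta-\theta')^\top x(\theta')+A(\theta)-A(\theta').
\]
Substituting $x(\theta')=-\nabla A(\theta')$ turns the right-hand side into $A(\theta)-A(\theta')-\nabla A(\theta')^\top(\theta-\theta')$, which is exactly $D_A(\theta,\theta')$.

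\medskip

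\noindent\textbf{Step 2 ($D_A=D_{A^*_-}$, Fenchel duality).} Here I would use the standard identity that for a convex function $f$ with smooth strictly convex conjugate $f^*$,
\[
D_f(\theta,\theta')=D_{f^*}(\nabla f(\theta'),\nabla f(\theta)),
\]
which follows immediately from $f(\theta)+f^*(\nabla f(\theta))=\theta^\top\nabla f(\theta)$ and the corresponding identity at $\theta'$. Applying this to $A$ and using $\nabla A(\theta)=-x(\theta)$ gives
\[
D_A(\theta,\theta')=D_{A^*}(-x(\theta'),-x(\theta)).
\]
Then a one-line check from the definition $A^*_-(x)=A^*(-x)$ (so that $\nabla A^*_-(y)=-\nabla A^*(-y)$) shows $D_{A^*_-}(x,y)=D_{A^*}(-x,-y)$, and the required equality $D_A(\theta,\theta')=D_{A^*_-}(x(\theta'),x(\theta))$ follows. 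Domain care is minor here because the hypothesis $\|x(\theta')-x(\theta)\|_{x(\theta)}\le 1/2$ keeps everything in the interior of $\K$ (equivalently in $\dom A^*_-$).

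\medskip

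\noindent\textbf{Step 3 (the quadratic bound via self-concordance).} For this step I would invoke the standard self-concordance estimate, which for any self-concordant $\varphi$ and $h$ with $\|h\|_x<1$ gives
\[
D_\varphi(x+h,x)\;\le\;\rho^*(\|h\|_x),\qquad \rho^*(t)=-t-\log(1-t).
\]
Bubeck--Eldan established that $A^*_-$ is self-concordant, so the estimate applies at $x=x(\theta)$ with $h=x(\theta')-x(\theta)$. Because $\rho^*(t)=\tfrac{t^2}{2}+\tfrac{t^3}{3}+\cdots\le \tfrac{t^2}{2(1-t)}$, the hypothesis $\|h\|_x\le 1/2$ gives $\rho^*(t)\le t^2\le 2t^2$, yielding
\[
D_{A^*_-}(x(\theta'),x(\theta))\;\le\;2\,\|x(\theta')-x(\theta)\|_{x(\theta)}^{2},
\]
which together with Steps 1--2 completes the lemma.

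\medskip

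\noindent\textbf{Main obstacle.} The calculational steps are essentially bookkeeping; the only conceptually delicate point is Step 3, where I am relying on the Bubeck--Eldan self-concordance of $A^*_-$ to import the standard quadratic bound for Bregman divergences of self-concordant barriers. The reflection $A^*_-(x)=A^*(-x)$ in Step 2 also requires a quick sign check so that the ``swap-and-negate'' Fenchel identity lands on the correct Bregman divergence; I expect this to be the place where one is most likely to pick up a stray sign error if not careful.
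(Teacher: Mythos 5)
Your proposal is correct and follows essentially the same route as the paper, which simply cites the classical Bregman--KL duality for the exponential family (Wainwright--Jordan, Eq.\ (5.10)) for the two equalities and the standard self-concordance estimate $D_\varphi(x+h,x)\le -\|h\|_x-\log(1-\|h\|_x)$ (Nemirovski, Eq.\ 2.4) for the final inequality. Your Steps 1--3 just spell out these two cited facts explicitly, and your sign bookkeeping for $A^*_-$ and the bound $\rho^*(t)\le t^2\le 2t^2$ for $t\le 1/2$ both check out.
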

\begin{proof}
	The duality relationship of the Bregman divergence, and its equivalence to Kullback-Leibler divergence, is classical and can be found in, e.g., \cite{wainwright2008graphical} equation (5.10)  The final inequality follow as a direct consequence of \cite{nemirovski1996interior}, Equation 2.4.
\end{proof}

\fi

\subsection{Equivalence of the heat path and central path}

\iffullversion

The most appealing observation on the equivalence between random walk optimization and interior point methods is the following geometric equivalence of curves. For a given convex set $\K \subseteq \reals^d$ and objective $\tht$, define the heat path as the following set of points:
$$ \hp  = \mathop{\cup}_{t \geq 0}  \{ \hp(t) \} = \mathop{\cup}_{t \ge 0}   \{  \E_{ x \sim P_{ \frac{ \tht } {t} }} [x] \} $$

To see that this set of points is a continuous curve in space, consider the central path w.r.t. barrier function $\varphi(x)$: 
$$ \cp(\varphi)  = \mathop{\cup}_{t \geq 0}  \{ \cp(t,\varphi) \} = \mathop{\cup}_{t \ge 0}   \{ \arg 	\min_{x \in \K}\;  \{ t  \tht^\top x + \varphi(x) \} $$

It is well known that the central path is a continuous curve in space for any self-concordant barrier function $\phi$. We now have the following immediate corollary of Proposition \ref{prop:connection}:
\begin{corollary}\label{cor:hpcp}
The central path corresponding to the self-concordant barrier $A^*$ over any set $\K$ is equivalent to the heat path over the same set, i.e. 
$$ \hp \equiv \cp(A^*)$$ 
\end{corollary}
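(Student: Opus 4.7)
The plan is to derive the corollary as an essentially immediate consequence of Proposition \ref{prop:connection}, which already identifies the mean of $P_\theta$ with the minimizer of $\theta^\top x + A^*_-(x)$ over $\text{int}(\K)$. All that remains is to match the two different temperature parameterizations used in the definitions of $\hp$ and $\cp$.

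First I would fix $t > 0$ and apply Proposition \ref{prop:connection} with parameter vector $\tht/t$ to obtain
\[
\hp(t) \;=\; \E_{X \sim P_{\tht/t}}[X] \;=\; \mathop{\arg\min}_{x \in \text{int}(\K)}\left\{ (\tht/t)^\top x + A^*_-(x) \right\}.
\]
Scaling the objective on the right by the positive constant $1/t$ does not change its minimizer, so this equals
\[
\mathop{\arg\min}_{x \in \text{int}(\K)}\left\{ (1/t^2)\tht^\top x + (1/t) A^*_-(x) \right\},
\]
and a more convenient rewrite comes from scaling the original display by $t$ instead, which gives $\arg\min_{x \in \text{int}(\K)}\{\tht^\top x + t\,A^*_-(x)\}$. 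Either way, after reparameterizing by $s := 1/t$ (resp.\ using the equivalent form) this is precisely $\cp(s, A^*_-)$, the point on the central path at temperature $s$. Hence $\hp(t) = \cp(1/t, A^*_-)$ pointwise.

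Taking unions, as $t$ ranges over $(0,\infty)$ so does $s = 1/t$, and therefore
\[
\hp \;=\; \bigcup_{t > 0} \hp(t) \;=\; \bigcup_{t > 0} \cp(1/t, A^*_-) \;=\; \bigcup_{s > 0} \cp(s, A^*_-) \;=\; \cp(A^*_-),
\]
as desired. One should read the symbol $A^*$ in the statement of the corollary as the entropic barrier $A^*_-$ on $\K$, in line with Section~\ref{sec:duality}, since $A^*$ itself is defined on $\text{int}(-\K)$.

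There is no real obstacle in this argument: the entire nontrivial content sits in Proposition \ref{prop:connection}, and what is left is only a bookkeeping reconciliation of the two temperature conventions (the heat path parameterizes by $\tht/t$ while the central path parameterizes by $t\,\tht$), together with the elementary fact that rescaling a convex objective by a positive constant leaves its argmin unchanged. The only subtlety worth flagging, beyond the $A^*$ versus $A^*_-$ convention, is that the identification is \emph{pointwise} under the map $t \mapsto 1/t$; to realize both paths as the same parametrized curve (rather than only as equal sets of points) one should orient $\hp$ with decreasing temperature, matching the increasing-temperature direction used for $\cp$.
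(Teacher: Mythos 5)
Your proposal is correct and matches the paper's approach: the paper presents this as an immediate consequence of Proposition~\ref{prop:connection}, and your argument simply spells out the (correct) observation that $\hp(t)=\cp(1/t,A^*_-)$ and that the two unions therefore coincide as sets. The intermediate rescalings of the objective are unnecessary (the identity $(\tht/t)^\top x + A^*_-(x) = s\,\tht^\top x + A^*_-(x)$ with $s=1/t$ is already in the central-path form), but they do no harm.
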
 

This mathematical equivalence is demonstrated in figure \ref{heatpath} generated by simulation over a polytope. 
%\begin{figure}[h!] \label{heatpath
%	\begin{center}
%		\includegraphics[width=2.5in]{../code/the_heat_path}
%	\end{center}
%	\caption{The heat and central paths in simulation. }
%\end{figure}

\begin{figure}[t]
\centering
\includegraphics[width=0.6\textwidth ]{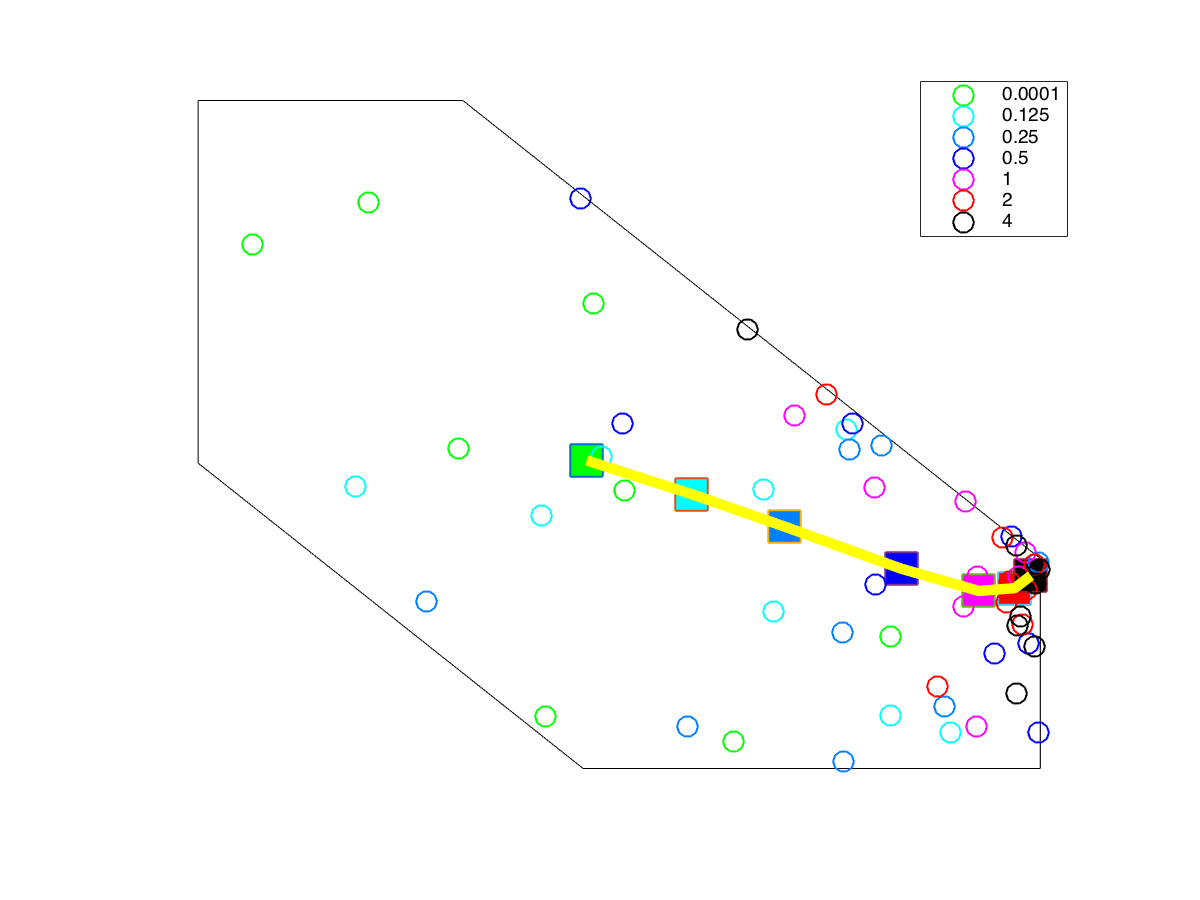}
\caption{For a set of seven different temperatures $t$, we used Hit-and-Run to generate and scatter plot several samples from $P_{\theta/t}$ using colored circles. We also computed the true means for each distribution, plotted with squares, giving a curve representing the $\hp$ across the seven temperatures. Of course via  Corollary~\ref{cor:hpcp} this corresponds exactly to the $\cp$ for the entropic barrier.}\label{heatpath}
\end{figure}

\fi

\subsection{IPM techniques for sampling and the new schedule}

\iffullversion

We now prove our main theorem, formally stated as:
\begin{theorem} \label{thm:ourmain}
	The temperature schedule of $\theta_1 = R$ where $R = \diam(\K)$ and $\theta_k := \left(1 - \frac 1 {4 \sqrt{\nu}}\right) \theta_{k-1}$, for $\nu$ being the self-concordance parameter of the entropic barrier for the set $\K$, satisfies condition \eqref{eqn:kv-cond} of theorem \ref{thm:KVmain}. Therefore  algorithm \ref{alg:sa} with this schedule returns an $\epsilon$-approximate solution in time $\tilde{O}(\sqrt{\nu} n^{4})$. 
\end{theorem}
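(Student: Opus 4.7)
The plan is to reduce the statement to verifying the Kalai--Vempala mixing condition \eqref{eqn:kv-cond} for consecutive stages of the proposed schedule, and then to import the per-stage temperature-update stability from the IPM analysis via the heat path / central path identification of Corollary~\ref{cor:hpcp}. Counting iterations is straightforward: by the sharpened error bound \eqref{eq:xberror} applied with $\varphi = A^*_-$ (available via Proposition~\ref{prop:connection}), reaching $\epsilon$-accuracy requires $t_k \ge \nu/\epsilon$, which with multiplicative ratio $(1-1/(4\sqrt{\nu}))$ on $t_k$ (equivalently a ratio $(1+\Theta(1/\sqrt{\nu}))$ on $\theta_k$ once one translates between the SA and IPM parameterizations) needs $T = O(\sqrt{\nu}\log(\nu/\epsilon)) = \tilde{O}(\sqrt{\nu})$ stages. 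Each stage runs \har\ once for the main sample and $O(n)$ times for covariance estimation, with $\tilde{O}(n^3)$ steps per invocation by Theorem~\ref{thm:KVmain}, giving total time $\tilde{O}(\sqrt{\nu} \cdot n \cdot n^3) = \tilde{O}(\sqrt{\nu}\,n^4)$.

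For the per-stage mixing condition, the exponential-family form of $P_\theta$ gives the closed-form identity
\[
\left\|\frac{P_{\theta'}}{P_\theta}\right\| = \exp\bigl(A(\theta + 2\delta) + A(\theta) - 2A(\theta + \delta)\bigr),\qquad \delta := \theta' - \theta.
\]
A one-line Taylor expansion (and the fact that self-concordance is preserved under Legendre duality, so that $A = (A^*_-)^*$ inherits the cubic self-concordance inequality from $\varphi = A^*_-$) shows that the exponent equals $\|\delta\|_{x(\theta)}^{*2}$ up to a third-order correction bounded by a constant multiple of $\|\delta\|_{x(\theta)}^{*3}$. It therefore suffices to certify that $\|\delta\|_{x(\theta)}^{*} \le c$ for a small absolute constant $c$; with $c = 1/4$ we get $\|P_{\theta'}/P_\theta\| \le \exp(1/16 + O(1/64)) < 10$, and the same argument with the roles of $\theta,\theta'$ swapped handles the other direction of \eqref{eqn:kv-cond}.

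To bound $\|\delta\|_{x(\theta)}^{*}$, use the IPM parameterization. By Proposition~\ref{prop:connection} the heat path point $x(\theta_{k-1})$ is exactly the unconstrained minimizer of $\Phi_{k-1}(\cdot) = t_{k-1}\tht^\top x + \varphi(x)$, so $\lambda(x(\theta_{k-1}),t_{k-1}) = 0$ and $\nabla\varphi(x(\theta_{k-1})) = -\theta_{k-1}$. Writing the next-stage $\theta_k$ as $\theta_{k-1}(1 - 1/(4\sqrt{\nu}))$ gives $\delta = -\frac{1}{4\sqrt{\nu}}\theta_{k-1}$, and thus
\[
\|\delta\|_{x(\theta_{k-1})}^{*} \;=\; \frac{1}{4\sqrt{\nu}}\|\nabla\varphi(x(\theta_{k-1}))\|_{x(\theta_{k-1})}^{*} \;\le\; \frac{\sqrt{\nu}}{4\sqrt{\nu}} \;=\; \tfrac{1}{4}
\]
by the gradient-of-barrier inequality \eqref{eqn:grad-barrier}. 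This is exactly the constant $c = 1/4$ used above, and it is also the content of Lemma~\ref{lem:tempupdate} with $c = 1/4$ applied at the minimizer. The mirror-image inequality $\|\delta\|_{x(\theta_k)}^{*} = O(c)$ needed for the swapped-role direction of \eqref{eqn:kv-cond} follows from Lemma~\ref{lem:divdec} applied once more: the bound $\|x(\theta_k) - x(\theta_{k-1})\|_{x(\theta_{k-1})} \le c/(1-c) \le 1/2$ (a consequence of the Newton-decrement-to-displacement inequality \cite{nemirovski1996interior} Eqn.~2.20) lets us push the dual local norm from one endpoint to the other with only a constant factor loss, by standard self-concordance second-order approximation of $\nabla^2 A$ along the segment.

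The main obstacle I anticipate is the third step, i.e.\ turning the local-metric closeness of the means into the $\chi^2$-type $\ell_2$ distance used in the KV condition. A direct KL bound from Lemma~\ref{lem:divdec} is not enough, since $\ell_2$ dominates KL; the clean way through is the exponential-family identity above, which expresses $\|P_{\theta'}/P_\theta\|$ as a symmetric second-difference of $A$, and the self-concordance of $A$ to absorb the higher-order terms. Choosing the schedule constant $c = 1/4$ is what keeps the resulting exponent comfortably below $\log 10$, matching the hypothesis of Theorem~\ref{thm:KVmain} and closing the argument.
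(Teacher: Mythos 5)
Your proposal is correct and follows essentially the same route as the paper: the same closed-form identity expressing the $\ell_2$ (chi-square--type) norm as a symmetric second difference of the log-partition function $A$, the same use of the fact that the heat-path point is the exact central-path minimizer (zero Newton decrement) together with the barrier-gradient inequality \eqref{eqn:grad-barrier} --- i.e.\ Lemma~\ref{lem:tempupdate} at the minimizer --- to certify that a $(1\pm\Theta(1/\sqrt{\nu}))$ rescaling of $\theta$ is a constant-size step in the local norm, and the same $\tilde{O}(\sqrt{\nu})\times O(n)\times\tilde{O}(n^3)$ iteration count. The only place you deviate is the middle step, where the paper bounds the two Bregman-divergence terms via Lemma~\ref{lem:divdec} and the mean-displacement/Newton-decrement inequalities of \citet{nemirovski1996interior}, while you Taylor-expand the second difference of $A$ directly using the self-concordance of $A$ inherited under Legendre duality; both variants land the exponent comfortably below $\log 10$.
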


Condition \eqref{eqn:kv-cond} is formally proved in the following lemma, which crucially uses the interior point methodology, namely Lemma \ref{lem:lambdabound}. 
\begin{lemma}
	Consider distributions $P_\theta$ and $P_{\theta'}$ where $\theta' = (1 + \gamma )\theta$ for $\gamma < \frac{1}{6 \sqrt{\nu}}$. Then we have the following bound on the $\ell_2$ distance between distributions:
		\begin{equation*} 
			\max\left\{  \left\| \frac{P_{\theta}}{P_{(1 + \gamma) \theta}} \right\|_2 ,  \left\|  \frac{P_{(1+\gamma) \theta}}{P_{\theta}}   \right\|_2 \right\} \quad \leq 10
		\end{equation*}
		
\end{lemma}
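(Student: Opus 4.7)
The plan is to reduce both ratios to a single second-order quantity involving the log-partition function $A$, and then control this quantity using the self-concordance of the entropic barrier. Using the exponential-family form \eqref{eq:redefptheta} of $P_\theta$, a direct computation yields
$$ \left\| \frac{P_{\theta'}}{P_{\theta}} \right\|_{2} \;=\; \int_{\K} \frac{P_{\theta'}(x)^{2}}{P_{\theta}(x)}\, dx \;=\; \exp\bigl(A(2\theta'-\theta) - 2A(\theta') + A(\theta)\bigr), $$
and symmetrically $\|P_{\theta}/P_{\theta'}\|_{2} = \exp(A(2\theta-\theta') - 2A(\theta) + A(\theta'))$. With $\theta' = (1+\gamma)\theta$ we have $2\theta'-\theta = (1+2\gamma)\theta$ and $2\theta-\theta' = (1-\gamma)\theta$, both of which lie in $\dom(A) = \reals^n$. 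Setting $h(s) := A((1+s\gamma)\theta)$, the first exponent is $h(2) - 2h(1) + h(0)$ and the second is $h(1) - 2h(0) + h(-1)$; by writing each as an iterated integral of $h''$, both are bounded by $\sup_{s \in [-1,\,2]} h''(s)$.

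Next I identify $h''(s) = \gamma^{2}\theta^{\top}\nabla^{2}A((1+s\gamma)\theta)\theta$ and bound it at the base point $\theta$. Fenchel duality gives $\nabla^{2}A(\theta) = (\nabla^{2}A^{*}_{-}(x(\theta)))^{-1}$, so
$$ \theta^{\top}\nabla^{2}A(\theta)\,\theta \;=\; \bigl(\|\theta\|^{*}_{x(\theta)}\bigr)^{2}, $$
where the dual local norm is taken with respect to the entropic barrier $A^{*}_{-}$. Proposition~\ref{prop:connection} combined with Fenchel duality moreover gives $\nabla A^{*}_{-}(x(\theta)) = -\theta$, and applying the barrier-gradient bound \eqref{eqn:grad-barrier} to the $\nu$-self-concordant barrier $A^{*}_{-}$ yields $\|\theta\|^{*}_{x(\theta)} = \|\nabla A^{*}_{-}(x(\theta))\|^{*}_{x(\theta)} \leq \sqrt{\nu}$. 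Consequently $h''(0) \leq \gamma^{2}\nu \leq 1/36$ under the hypothesis $\gamma < 1/(6\sqrt{\nu})$.

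Finally, I transfer this bound to the entire range $s \in [-1,2]$ using self-concordance. Since $A^{*}_{-}$ is self-concordant, so is its Fenchel conjugate $A$ on $\reals^{n}$, and therefore whenever $\|\Delta\|_{\theta}^{A} := \sqrt{\Delta^{\top}\nabla^{2}A(\theta)\Delta} \leq r < 1$ one has $\nabla^{2}A(\theta + \Delta) \preceq (1-r)^{-2}\,\nabla^{2}A(\theta)$. Taking $\Delta = s\gamma\theta$ for $s \in [-1,2]$, the local norm is $|s|\gamma\,\|\theta\|^{*}_{x(\theta)} \leq 2\gamma\sqrt{\nu} \leq 1/3$, so $h''(s) \leq (9/4)\,h''(0) \leq 9/144 = 1/16$ uniformly. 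Both exponents are therefore at most $1/16$, and both ratios are bounded by $e^{1/16} < 2 \ll 10$, establishing \eqref{eqn:kv-cond}. The main obstacle is the careful juggling of primal and dual objects in the middle step---matching $\theta$ with $-\nabla A^{*}_{-}(x(\theta))$ so that the universal barrier bound \eqref{eqn:grad-barrier} applies, and then invoking the Hessian-stability consequence of self-concordance in the correct (primal-of-$A$, dual-of-$A^{*}_{-}$) space; once that identification is made, everything else is a routine Taylor estimate.
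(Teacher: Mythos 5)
Your proof is correct, but it takes a genuinely different route from the one in the paper. Both arguments start from the same identity, namely that the logarithm of the $\ell_2$ ratio is a second difference of the log-partition function $A$ along the ray through $\theta$ (the paper writes this as $D_A((1+\gamma)\theta,\theta)+D_A((1-\gamma)\theta,\theta)$, which equals your $h(1)-2h(0)+h(-1)$ since the linear terms cancel). From there the paper converts each Bregman divergence into a squared local distance between consecutive points on the central path via Lemma~\ref{lem:divdec}, and controls that distance through the Newton-decrement machinery: Lemma~\ref{lem:tempupdate} to bound $\lambda(x(\theta),1+\gamma)$ after the temperature update, and the standard bound relating the Newton decrement to the distance from the minimizer ((2.27)--(2.28) of \cite{nemirovski1996interior}). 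You instead run a direct Taylor estimate on $h(s)=A((1+s\gamma)\theta)$: the identification $h''(0)=\gamma^2(\|\theta\|^*_{x(\theta)})^2\le\gamma^2\nu$ via $\nabla A^*_-(x(\theta))=-\theta$ and the gradient bound for $\nu$-self-concordant barriers, followed by the Hessian-stability inequality $\nabla^2A(\theta+\Delta)\preceq(1-r)^{-2}\nabla^2A(\theta)$ to extend the bound over $s\in[-1,2]$. This buys a cleaner and more self-contained argument with a much better constant ($e^{1/16}$ rather than $e$), and it bypasses the KL/Bregman detour and the Newton-decrement lemmas entirely; its one external ingredient is the self-concordance of $A$ itself, which the paper does have available (via \cite{bubeck2014entropic}, or the conjugation-preserves-self-concordance theorem it cites), but which you should state as an explicit hypothesis rather than assert in passing. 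The paper's longer route has the expository virtue of exercising exactly the IPM lemmas that the equivalence between annealing and path-following is meant to showcase. One small point worth making explicit in your write-up: the Hessian comparison must be applied in the local norm of $A$ at $\theta$, and the step $\|s\gamma\theta\|^A_\theta=|s|\gamma\|\theta\|^*_{x(\theta)}$ is valid precisely because $\nabla^2A(\theta)=(\nabla^2A^*_-(x(\theta)))^{-1}$; you do this correctly, but it is the place where the primal/dual bookkeeping could silently go wrong.
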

\begin{proof}
	We first show by elementary linear algebra that  
	\begin{equation*}
			\| P_\theta/P_{(1 - \gamma)\theta} \|  = \| P_\theta/P_{(1 + \gamma)\theta} \| = \exp(D_A((1+\gamma)\theta, \theta) + D_A((1-\gamma)\theta, \theta)).
		\end{equation*}
		Let us consider the $\log$ of the 2-norm:  \eh{isn't there a minus sign here everywhere??} 
	\begin{eqnarray*}
		\log \| P_\theta/P_{(1 + \gamma)\theta} \| 
		& = & \log \int_{\K} \frac{\exp(-\theta^\top x - A(\theta))}{\exp(-(1+\gamma)\theta^\top x - A((1+\gamma)\theta))} dP_\theta \\
		& = & \log \int_{\K} \exp(\gamma \theta^\top x - A(\theta) + A((1+\gamma)\theta)) dP_\theta \\
		& = & \log \int_{\K} \exp(\gamma \theta^\top x - A(\theta) + A((1+\gamma)\theta)) \exp(-\theta^\top x - A(\theta)) dx \\
		& = & A((1+\gamma)\theta) - 2A(\theta) + \log \int_{\K} \exp(-(1-\gamma)\theta^\top x )dx  \\
		& = &A((1+\gamma)\theta) - 2A(\theta) + A((1-\gamma)\theta)  \\
		& = & D_A((1+\gamma)\theta, \theta) + D_A((1-\gamma)\theta, \theta).
	\end{eqnarray*}
Replacing $\gamma$ by $-\gamma$, we get a completely symmetrical expression.
Next, we observe that 
$$ 	\| P_{\theta(1+\gamma)}  /P_{\theta} \| =  \| P_{\tilde{\theta}}  /P_{\tilde{\theta}( 1 - \tilde{\gamma}) } \|  $$
where $\tilde{\theta}  = \theta(1 + \gamma)$ and $1 - \tilde{\gamma} = \frac{1}{1 + \gamma}  = 1 - \frac{\gamma}{1 + \gamma} $, thus $\tilde{\gamma} \in \gamma \times [1 , 2] $.  By this observation, both sides of the lemma follow if we prove an upper bound 
$$\left\| \frac{P_{\theta}}{P_{(1 + \gamma) \theta}} \right\|_{2} \leq 10 \quad \text{  for    } \quad \gamma <  \frac{1}{6 \sqrt{\nu} } \times 2 = \frac{1}{3 \sqrt{\nu} } $$

	Lemma~\ref{lem:tempupdate} implies $\lambda(x(\theta),1 + \gamma) \leq (1 + c) \lambda(x(\theta),1) + c =  c \le \frac{1}{4}$. Applying Lemma~\ref{lem:divdec}, 
	\begin{eqnarray*}
		D_A((1+\gamma)\theta, \theta) & \leq  2 \|x(\theta) - x((1+\gamma)\theta)\|^2_{x((1+\gamma)\theta)} & \mbox{ Lemma \ref{lem:divdec} } \\
		& \leq  2 \left(\frac{ 	\lambda(x(\theta),1 + \gamma) 	}{ 	1-\lambda(x(\theta),1 + \gamma) 	} \right)^2  & \mbox { (2.28) in \citep{nemirovski1996interior}}  \\
		&  \leq 2 \left(\frac{c}{1-c}\right)^2 < 2 \frac{(1/3)^2}{(2/3)^2}  = \frac 1 2  & \mbox{ Lemma \ref{lem:tempupdate}} 
	\end{eqnarray*}
Notice that to apply Lemma \ref{lem:divdec}, we need the point $x((1 + \gamma)\theta) $ to lie in the Dikin ellipsoid of $x(\theta)$, which is exactly whats proved in the last two lines of the above proof.  	
	
%	where the second inequality uses (2.28) from \cite{nemirovski1996interior}. 	
	The bound on $D_A((1-\gamma)\theta, \theta)$ follows in precisely the same fashion, by similar change of variables as before (again, the condition for applying Lemma \ref{lem:divdec} is proven in the last few lines of the equations below):
	\begin{eqnarray*}
		D_A((1-\gamma)\theta, \theta) & =  D_A(  \tilde{\theta}, \tilde{\theta} ( 1 + \tilde{\gamma} )) \\
		& \leq  2 \|x( \tilde{\theta} ) - x((1+\tilde{\gamma}) \tilde{\theta})\|^2_{x( \tilde{\theta} )}  & \mbox { Lemma \ref{lem:divdec} } \\
		& \leq  2 \left(\frac{ 	\lambda(x(\tilde{\theta}),1 + \tilde{\gamma}) 	}{ 1-\lambda(x(\tilde{\theta}),1 + \hat{\gamma}) 	} \right)^2 & \mbox{ (2.27) in \citep{nemirovski1996interior}} \\
		&  \leq 2 \left(\frac{c}{1-c}\right)^2 < 2 \frac{(1/3)^2}{(2/3)^2}  = \frac 1 2 & \mbox{ Lemma \ref{lem:tempupdate}} 
	\end{eqnarray*}
	
	It follows that 
	$\| P_\theta/P_{(1 + \gamma)\theta} \|  \leq  e^{ D_A((1+\gamma)\theta, \theta) + D_A((1-\gamma)\theta, \theta)} \leq e^{\frac{1}{2} + \frac{1}{2}}  \leq 10$.
\end{proof}

\fi

\subsection{Some history on the entropic barrier and the universal barrier for cones} \label{sec:entbarrierhist}

\iffullversion

Let $K$ be a cone in $\reals^n$ and let $K^* = \{ \theta : \theta^\top x \geq 0\; \forall\, x \in K \}$ be its dual cone. We note that a cone $K$ is \emph{homogeneous} if its automorphism group is transitive; that is, for every $x,y \in K$ there is an automorphism $B : K \to K$ such that $Bx = y$. Homogeneous cones are very rare, but one notable example is the PD cone (matrices with all positive eigenvalues). Given any point $x \in K$, we can define a truncated region of $K^*$ as the set $K^*(x) := \{ y \in K^*: x^\top y \leq 1 \}$. \citet{nesterov1994interior} defined the first generic self-concordant barrier function, known as the \emph{universal barrier} in terms of these regions. Namely, they show that the function 
\[
	u_K(x) := \log(\vol(K^*(x)))
\]
is a self concordant barrier function with an $O(n)$ parameter.

There is an alternative characterization of the universal barrier in terms of the larg partition function. Let $F_{K}(x) := \log \int_{K^*} \exp(\theta^\top x)d\theta$ and equivalently let $F_{K^*}(\theta) := \log \int_{K} \exp(\theta^\top x)dx$. It was shown by \citet{guler1996barrier} that
\[
	F_{K}(x) = u_K(x) + n!,
\]
that is, the universal barrier corresponds exactly to a log-partition function but defined on \emph{the dual cone} $K^*$, modulo a simple additive constant. We note that this is not the entropic barrier construction we have here, as our function of interest is $A^*_-(\cdot) \equiv F^*_{K^*}(\cdot)$ (the Fenchel conjugate of $F_{K^*}(x)$), and not $F_K(x)$. However, it was also shown by \citet{guler1996barrier} that, when $K$ is a homogeneous cone, we have the identity $F_K(\cdot) \equiv F^*_{K^*}(\cdot)$; in other words, the universal barrier and the entropic barrier are equivalent for homogeneous cones.

It is worth noting that, following the connection of \citet{guler1996barrier}, $A^*_-(\cdot)$ is (up to additive constant) the Fenchel conjugate of the universal barrier $u_{K^*}$ for $K^*$. It was shown by \citet{nesterov1994interior} (Theorem 2.4.1) that Fenchel conjugation preserves all required self concordance properties and in particular if $g$ is a $\nu$-self-concordant barrier for a cone $K$, then $g^*$ will be a self-concordant barrier for $K^*$ with the same parameter $\nu$. With this observation it follows immediately that the entropic barrier $F^*_{K^*}(\cdot)$ on $K$ is an $O(n)$-self-concordant barrier. \citet{bubeck2014entropic} took this statement further, proving that $F^*_{K^*}(\cdot)$ enjoys an essentially optimal self-concordance parameter $\nu = n(1 + o(1))$.

It is important to note that the assumption that the set of interest is a cone is, roughly speaking, without loss of generality. Given any convex set $U \subset \reals^n$ we have the \emph{fitted cone} $K(U) := \{ \alpha(x,1) : x \in U, \alpha \geq 0 \} \subseteq \reals^{n+1}$. Hence once can always work with the barrier function $F_{K(U)^*}^*(\cdot)$ on $K(U)$, and take its restriction to the set $U\times\{1\} \subset K(U)$ to obtain a barrier on $U$ (affine restriction preserves the barrier properties).

We conclude by summarizing several results in \citet{guler1996barrier} regarding the entropic barrier for various cones, as well as the associated barrier parameter of each. In these canonical cases the entropic barrier corresponds exactly to the ``typical'' barrier, up to additive and multiplicative constants. We use the notation $f(\cdot) \cong g(\cdot)$ to denote that $f$ and $g$ are identical up to additive constants.
\begin{enumerate}
	\item Assume $K := \reals^n_+$ the nonnegative orthant. This is a homogeneous cone and we have $F_K(x) \cong F_{K^*}^*(x) \cong -\sum_{i=1}^n \log x_i$. This is the optimal barrier for $K$ and the barrier parameter is $\nu = n$.
	\item Assume $K := \{ x \in \reals^n: x_1^2 + \ldots + x_{n-1}^2 \leq x_n^2 \}$ be the \emph{Lorentz cone}. $K$ is a homogeneous self-dual cone.  $K$ can also be described by the fitted cone of the radius-1 $L2$ ball, so we may parameterize elements of $K$ as $\alpha(x,1)$ where $\alpha \geq 0$ and $x$ is vector in $\reals^{n-1}$ with $L2$ norm bounded by 1. Then $F_K(\alpha(x,1)) \cong F_{K^*}^*(\alpha(x,1)) \cong - \frac{n+1}{2} \log(\alpha^2 - x^2)$. This barrier has parameter $\nu = n+1$ which is indeed not optimal, one has the optimal barrer $-\log(\alpha^2 - x^2)$ which has parameter $\nu = 2$, but this is simply a scaled version of the entropic barrier.
	\item The PSD cone $K$ of positive semi-definite matrices, i.e. symmetric matrices with non-negative eigenvalues, is a homogeneous self-dual cone. The entropic barrier is $F_K(x) \cong F_{K^*}^*(x) \cong - \frac{n+1}{2} \log \text{det}(x)$ and exhibits a parameter of $\nu = \frac{n(n+1)}{2}$ which is multiplicatively $\frac{n+1}{2}$ worse than the optimal barrier, the log-determined $-\log \text{det}(x)$. However, as can be seen this barrier is quite simply a scaled version of the entropic barrier.
\end{enumerate}

\fi

 % \printbibliography

%\bibliographystyle{alpha}
\bibliographystyle{abbrvnat}
\bibliography{sampling}

%\printbibliography

% \newpage

\appendix

\section{An Explanation of Newton's Method via Reweighting} \label{sec:reweighting}

\iffullversion

Proposition~\ref{prop:connection} brings out a strong connection between interior point techniques and the ability to sample from Boltzmann distributions. But with this stochastic viewpoint, it is not immediately clear why Newton's method is an appropriate iterative update scheme for optimization. We now provide some evidence along these lines.

Assuming we have already computed (an approximation of) $x(\theta)$, and our distribution parameter is updated to a ``nearby'' $\theta'$, our goal is now to compute the new mean $x(\theta')$.
\begin{eqnarray*}
-x(\theta') = \int_\K x \, dP_{\theta'} = \int_\K x \, \frac{dP_{\theta'}(x)}{dP_{\theta}(x)}dP_{\theta} & = & \E_{X \sim P_\theta}\left[ X \exp(X^\top (\theta - \theta^{\prime}) + A(\theta') - A(\theta)) \right]
\end{eqnarray*}
Think of the last term as the reweighting factor. Now we are going to rewrite $A(\theta) - A(\theta') = - \nabla A(\theta) (\theta' - \theta) - D_A(\theta',\theta) =  x(\theta)^\top(\theta' - \theta) - \text{KL}(P_\theta,P_{\theta'})$. We shall use the following approximation of the exponential: $\exp(z) \approx 1 + z$ for small values of $z$. Proceeding, 
\begin{eqnarray*}
	- x(\theta') & = & \E_{X \sim P_\theta}\left[ X \exp(X^\top (\theta - \theta^{\prime}) - x(\theta)^\top(\theta' - \theta) + \text{KL}(P_\theta,P_{\theta'})) \right] \\
	& = & e^{\text{KL}(P_\theta,P_{\theta'})} \E_{X \sim P_\theta}\left[ X \exp((X - x(\theta))^\top (\theta^{\prime} - \theta)) \right] \\
	& \approx & e^{\text{KL}(P_\theta,P_{\theta'})} \E_{X \sim P_\theta}\left[ X (1 + (X - x(\theta))^\top (\theta^{\prime} - \theta)) \right] \\
	& = & e^{\text{KL}(P_\theta,P_{\theta'})} (- x(\theta) + \Sigma_\theta (\theta' - \theta)). 
\end{eqnarray*}
Duality theory tells us that $\Sigma_\theta = \nabla^2A(\theta) = \nabla^{-2} A^*(x(\theta))$ and $\theta' - \theta$ is precisely the gradient of the objective $\theta^{\prime \top} x - A^*(x)$ at the point $x(\theta)$. The $e^{\text{KL}(P_\theta,P_{\theta'})}$ term is somewhat mysterious, but it can be interpreted as something of a ``damping'' factor akin to the Newton decrement damping of the the Newton update.

\section{Proof structure of the Kalai-Vempala theorem}

We hereby sketch the structure of the proof of theorem \ref{thm:KVmain} for completeness. 
Recall the statement of the theorem: 

Algorithm \ref{alg:sa} with a temperature schedule that satisfies the following condition:\\

 The successive distributions are not ``too far'' in total variational distance. That is, for every $j$, \\
%\begin{equation*}
$$ \max\left\{  \left\| \frac{P_{\theta_j}}{P_{\theta_{j-1}}} \right\|_2 ,  \left\|  \frac{P_{\theta_{j-1}}}{P_{\theta_{j}}}   \right\|_2 \right\}   \leq  10 $$
%\end{equation*}

Guarantees that \har\ requires $N = \tilde O( n^3 )$ steps in order to ensure mixing to the stationary distribution $P_{\theta_{j}}$.

\begin{proof}[Proof sketch]
The proof is based on iteratively applying the following Theorem from \cite{LovaszV06}:

\begin{theorem} \label{thm:LV-old}
Let f be a density proportional to $e^{-a^T x} $  over a convex set K such that 
\renewcommand{\theenumi}{[\alph{enumi}]}
\begin{enumerate}
\item
the level set of
probability 1/64 contains a ball of radius s
\item
$ \E_f (\| x - \mu_f\|^2 ) \leq S$, where $\mu_f = \E_f[x]$ is the mean of $f$
\item
the $\ell_2$ norm of the starting
distribution $\sigma$ w.r.t. the stationary distribution of \har\ denoted $\pi_f$,  is at most M. 
\end{enumerate}
Let $\sigma^m$ be the distribution of the current
point after m steps of \har\ applied to f. Then, for any $\tau > 0$, after $ m = O(\frac{n^2 S^2}{s^2} \log^5 \frac{nM}{\tau} )$
steps, the total variation distance of $\sigma^m$ and $\pi_f$ is less than $\tau$. 
\end{theorem}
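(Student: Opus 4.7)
The plan is to derive the mixing bound via the standard conductance framework of Lovász-Simonovits, combining a geometric overlap estimate for one-step $\har$ distributions with a log-concave isoperimetric inequality. This gives a lower bound on the conductance of the $\har$ Markov chain, which then yields an $\ell_2$ mixing time for warm starts through the $s$-conductance technique.

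The first step is a one-step overlap lemma: there exists $\delta_0 = \Theta(s/(n\sqrt{S}))$ such that if $u, v$ lie in a ``central'' subset $K' \subseteq K$ (points not too close to the boundary, whose complement has small $\pi_f$-measure by assumption [a]) and $\|u - v\|_2 \leq \delta_0$, then the one-step transition kernels $P_u, P_v$ of $\har$ satisfy $d_{TV}(P_u, P_v) \leq 3/4$. The argument has two parts: (i) using [a] and [b], typical chord lengths through a central point are of order $\sqrt{S}$, so $\har$'s effective step scale along a random direction is $\Theta(\sqrt{S})$; (ii) the one-dimensional Boltzmann distributions induced on nearby parallel chords from $u$ and $v$ can be coupled with high TV overlap when the chords differ by at most $\delta_0$, by comparing the 1D log-densities term by term.

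The second step combines this with log-concave isoperimetry. The Kannan-Lovász-Simonovits inequality says that for any partition $K = A \cup B \cup C$ with $d(A, B) \geq \delta$, $\pi_f(C) \geq c_1 \cdot (\delta/\sqrt{S}) \cdot \min(\pi_f(A), \pi_f(B))$. Fix any $A$ with $\pi_f(A) \in [s_0, 1/2]$, let $A_1 \subseteq A \cap K'$ be those points from which $\har$ moves to $A^c$ with probability $< 1/8$, and define $B_1 \subseteq A^c \cap K'$ symmetrically. If $\pi_f(A \setminus A_1) \geq \pi_f(A)/4$ then the flow $\Phi(A)$ is already $\Omega(\pi_f(A))$; otherwise, the one-step lemma forces $d(A_1, B_1) \geq \delta_0$, and isoperimetry gives $\pi_f(K \setminus (A_1 \cup B_1)) = \Omega((\delta_0/\sqrt{S}) \cdot \pi_f(A))$, again yielding comparable flow. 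This produces an $s_0$-conductance bound $\phi_{s_0} = \Omega(s/(nS))$, hence $\phi_{s_0}^{-2} = O(n^2 S^2/s^2)$. Converting to $\ell_2$ mixing via the Lovász-Simonovits $s$-conductance theorem with $s_0 = \tau/(nM)$ and assumption [c] yields the claimed bound $m = O((n^2 S^2/s^2)\log^5(nM/\tau))$, with the polylogarithmic factor absorbing the $s$-conductance overhead and the repeated boundary-exclusion losses.

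The main obstacle is the one-step overlap lemma. Two issues must be handled: first, ``bad'' random directions along which the chord through $u$ or $v$ exits $K$ very asymmetrically would spoil the overlap, and one must show these directions have small total probability via a distributional argument on chord lengths that uses log-concavity of $f$ together with [a] and [b]; second, the Boltzmann restrictions to nearby chords must be compared in TV via an explicit calculation that controls both multiplicative discrepancies in the 1D density $e^{-a^\top x}$ and geometric discrepancies in chord endpoints, with each discrepancy absorbed into a constant TV budget. Tightly tracking all these losses and iterating the conductance-decay argument is what produces the $\log^5$ exponent in the final bound.
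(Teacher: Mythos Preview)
The paper does not prove this theorem at all: it is quoted verbatim from \cite{LovaszV06} and used as a black box in the proof sketch of Theorem~\ref{thm:KVmain}. There is therefore no ``paper's own proof'' to compare against; the authors simply invoke the result.

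Your sketch is, however, a reasonable outline of how the result is actually established in the Lov\'asz--Vempala source. The conductance route---one-step overlap for nearby points, combined with log-concave isoperimetry, fed into the $s$-conductance mixing bound---is exactly the machinery used there. A few remarks on your write-up: (i) be careful with dimensions, since as stated in the theorem $S$ bounds the second moment $\E_f\|x-\mu_f\|^2$, not a length; your formula $\delta_0=\Theta(s/(n\sqrt{S}))$ is only dimensionally consistent under that reading, and the isoperimetric factor should likewise use $1/\sqrt{S}$; (ii) the one-step overlap lemma for $\har$ is more delicate than you indicate, as the relevant notion of ``closeness'' of $u,v$ is not Euclidean distance alone but a cross-ratio condition along the chord, which is what the original proof uses to avoid the bad-direction issue you flag; (iii) the $\log^5$ factor comes from a specific iteration of the $s$-conductance argument together with a speedy-walk reduction in the source, not just ``repeated boundary-exclusion losses,'' so if you intend to reconstruct the constant you would need to track that more carefully. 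None of this is a gap in the sense of a wrong idea---your high-level plan is the correct one---but for the purposes of the present paper no proof is expected, only the citation.
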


The proof proceeds to show that conditions [a]-[c] of the theorem above are all satisfied if indeed condition \eqref{eqn:kv-cond} is satisfied, along the  steps below. Once it is established that the conditions of the theorem hold, then the next \har\ walk mixes and computes warm start and variance estimates for the next epoch. Then again, the conditions of the theorem hold, and this whole process is repeated for the entire temperature schedule. 

To show conditions [a]-[c], first notice that condition \eqref{eqn:kv-cond} is essentially equivalent to condition [c] above. Thus we only need to worry about conditions [a],[b].

\renewcommand{\theenumi}{[\Roman{enumi}]}
\begin{enumerate}
\item
For simplicity, we  assumed that at the current iteration, $\Sigma_j = I $ is the identity. This can be assumed by a transformation of the space, and allows for simpler discussion of isotropy of densities (otherwise, the isotropy condition would be replaced by relative-isotropy w.r.t the current variance). 
	
\item
A density $f$ is C-isotropic if for any unit vector $\|v\| = 1$, 
$$ \frac{1}{C} \leq \int_{\reals^n} (v^\top (x - \mu_f))^2 f(x) dx \leq C$$
It is shown (Lemma 4.2) that if the density given by $f$  is $O(1)$-isotropic, then conditions [a],[b] are satisfied with $\frac{S}{s} = \tilde{O}(\sqrt{n})$. 

\item
It is shown (Lemma 4.3) that if $f$ is C-isotropic, and $  \max\left\{  \left\|\frac{f}{g}\right\|_2 ,  \left\|\frac{g}{f}\right\|_2 \right\} \leq D$, then $g$ is $CD$-isotropic.  

\item
Since condition \eqref{eqn:kv-cond} holds, together with the previous points [II,III] this implies that $f_{\theta_{j+1}}$ is isotropic for some constant. Thus,  conditions [a]-[c] of Theorem \ref{thm:LV-old} hold. Therefore we can sample sufficiently many samples to estimate the covariance matrix $\Sigma_{j+1}$ and proceed to the next epoch.   
\end{enumerate}

Throughout the proof special care needs to be taken to ensure that repeated samples are nearly-independent for various concentration lemmas to apply, we omit discussion of these and the reader is referred to the original paper of \cite{kalai2006simulated}. 
	
\end{proof}

\fi

\section{Interior point methods with a membership oracle} \label{sec:ipm-sampling}

\iffullversion

Below we sketch a universal IPM algorithm - one that applies to any convex set described by a membership oracle - that can be implemented to run in polynomial time. This algorithm is an instantiation of Algorithm \ref{alg:newton} with the particular barrier function $A^*(x)$ as defined in section \ref{sec:duality}.

Without loss of generality, we can assume our goal is to (approximately) compute the update direction
\[
	\nabla^{-2}A^*(x)(\theta - \nabla A^*(x))
\]
for some $x$ which is already within the Dikin ellipsoid of radius 1/2 around $x(\theta)$. First, we note that the IPM analysis of \cite{nemirovski1996interior} allows one to replace the inverse hessian $\nabla^{-2}A^*(x)$ with the nearby $\nabla^{-2}A^*(x(\theta)) = \text{CovMtx}(P_\theta)$. Of course the latter can be estimated via sampling, in the sense that the estimate $\hat \Sigma$ will be ``$\epsilon$-isotropically close'':
$$(1-\epsilon)v^\top \nabla^2 \Psi(\theta') v \leq v^\top  \hat \Sigma v \leq (1+\epsilon)v^\top \nabla^2 \Psi(\theta') v $$
for any unit vector $v$. See, for example, \cite{adamczak2010quantitative} on the concentration of empirical covariance matrices.

% \begin{algorithm}[h!]
% 	\caption{Universal IPM based on sampling \label{alg:ipm-sampling}}
% 		 Input: initial $x_0$, self-concordance parameter $\nu$, membership oracle for $\K$. \\
% 		 Input: parameter $\epsilon > 0$ \\
% 	 	\For{$t = 1, \ldots, \nu \log \frac{1}{\epsilon}$} {
% 		 Let $t_t = \left( 1 + \frac{1}{6 \sqrt{\nu}} \right)^t$\\
% 		 Query \texttt{duality oracle} for computing $\theta(x_t)$ \\
% 		 Sample $X_1, \ldots, X_m \sim P_{\theta(x_t)}$ \\
% 		 Let $\bar{X} := \frac 1 m \sum_i X_i$ and $\bar{H} = \frac 1 m \sum_i (X_i  - \bar{X}) (X_i - \bar{X})^\top$ \\
% 		 Compute the Newton update $x_{t+1} = x_t - \frac{1}{1 + \lambda_k} \bar{H} \cdot( t_t \tht + \theta(x_t))$ \\
% 		}
% 		Return $x_t$
% \end{algorithm}

It remains to compute $\nabla A^*(x)$.
Define $\theta(x)$ to be 
\begin{equation}
	\theta(x) = \arg\max_{\theta} \theta\cdot x - \log \int_K \exp(-\theta \cdot y) dy = \nabla A^*(x) 
\end{equation}
Then $\theta(x)$ can be computed in polynomial time by another interior point algorithm -- this problem, however, is much simpler to work with. Define $\Psi(\theta') := \theta\cdot x - \log \int_K \exp(-\theta \cdot y) dy$ to be the objective we want to optimize. Notice that $\nabla \Psi(\theta') = x - \E_{X' \sim P_{\theta'}}[X']$ and the latter can be estimated to within $\epsilon$ via \textsc{SimulatedAnnealing} with $\tilde{O}(n/\epsilon^2)$ samples. The hessian $\nabla^2 \Psi(\theta') = - \text{CovMtx}(P_{\theta'})$ can similarly be estimated with an $\epsilon$-isotropically close empirical covariance. Because the error gap is multiplicatively close to 1, the inverse operation on $\nabla^2 \Psi(\theta')$ maintains the approximation.

\fi

\end{document}